\theoremstyle{plain}
\newtheorem{thm}{Theorem}
\newtheorem{prop}[thm]{Proposition}
\newtheorem{lem}[thm]{Lemma}
\newtheorem{cor}[thm]{Corollary}
\theoremstyle{definition}
\newtheorem{defi}[thm]{Definition}
\theoremstyle{remark}
 \newtheorem{rem}[thm]{Remark}
\newcommand{\Bcal}{{\mathcal B}}
\newcommand{\Ccal}{{\mathcal C}}
\newcommand{\Dcal}{{\mathcal D}}
\newcommand{\Fcal}{{\mathcal F}}
\newcommand{\Hcal}{{\mathcal H}}
\newcommand{\Rcal}{{\mathcal R}}
\newcommand{\N}{{\mathbb{N}}}
\newcommand{\Z}{{\mathbb{Z}}}
\newcommand{\Q}{{\mathbb{Q}}}
\newcommand{\R}{{\mathbb{R}}}
\newcommand{\C}{{\mathbb{C}}}
\newcommand{\GL}{\operatorname{GL}}
\renewcommand{\Im}{\operatorname{Im}}
\renewcommand{\Re}{\operatorname{Re}}
\newcommand{\Vect}{\operatorname{Vect}}
\newcommand{\mt}{\mapsto}	
\newcommand{\ra}{\rightarrow}
\newcommand{\quot}[2]                                    
{\raisebox{.6ex}{\newline$#1$}\!/\!\raisebox{-.6ex}{$#2$}}
\newcommand{\xonsdp}{X_0 ^{{\textrm{ns}}} (d;p)}
\newcommand{\matabcd}{\begin{pmatrix} a & b \\ c & d \end{pmatrix}}
\title{\Huge Nonvanishing of central values of $L$-functions of newforms in $S_2 (\Gamma_0 (dp^2))$ twisted by quadratic characters }
\author{Samuel Le Fourn\footnote{Email : \url {samuel.le_fourn@ens-lyon.fr}  }\\
ENS de Lyon}
\date\today
\begin{document}
\maketitle

\begin{abstract}
We prove that for $d \in \{ 2,3,5,7,13 \}$ and $K$ a quadratic (or rational) field of discriminant $D$ and Dirichlet character $\chi$, if a prime $p$ is large enough compared to $D$, there is a newform $f \in S_2(\Gamma_0(dp^2))$ with sign $(+1)$ with respect to the Atkin-Lehner involution $w_{p^2}$ such that $L(f \otimes \chi,1) \neq 0$. This result is obtained through an estimate of a weighted sum of twists of $L$-functions which generalises a result of Ellenberg. It relies on the approximate functional equation for the $L$-functions $L(f \otimes \chi, \cdot)$ and a Petersson trace formula restricted to Atkin-Lehner eigenspaces. An application of this nonvanishing theorem will be given in terms of existence of rank zero quotients of some twisted jacobians, which generalises a result of Darmon and Merel.
\smallskip

\textbf{Keywords.} \, Nonvanishing of $L$-functions of modular forms, Petersson trace formula, rank zero quotients of jacobians.
\smallskip

\textbf{MSC201} \, \, 11F67 and 14J15.
\end{abstract}

\section{Introduction}

Let $d \in \{2,3,5,7,13\}$ (that is, a prime number such that the genus of $X_0(d)$ is zero) and $K/\Q$ be a real extension of degree at most two, with discriminant $D$ assumed prime to $d$ and associated Dirichlet character $\chi$.

The main result of this paper is the following (with the usual notations, recalled in section \ref{sectionnotations}).
 \begin{thm}
 \label{thmmain}
For every prime number $p$ not dividing $dD$,
 	\[
 		\sum_{f \in \Bcal_p} \overline{a_1 (f)} L(f \otimes \chi,1) = 2 \pi + \mathcal{O} \left( \frac{\sqrt{D}(\log(D)+1)^3 \log(p)^2}{p^2} \right),
 	\]
where $\Bcal_p$ is a Petersson-orthonormal basis of the subspace of $S_2(\Gamma_0(dp^2))^{\textrm{new}}$ spanned by the modular forms fixed by the Atkin-Lehner involution $w_{p^2}$, and the implied constant is absolute.
 \end{thm}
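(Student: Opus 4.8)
The strategy, following Iwaniec--Sarnak and Ellenberg, is to open each central value by an approximate functional equation, interchange the resulting Dirichlet sum with the sum over $\Bcal_p$, insert a Petersson trace formula adapted to the Atkin--Lehner $+1$-eigenspace of the new part, and check that the only surviving contribution is the diagonal term $n=1$, which produces $2\pi$. To begin, I would record the analytic input: since $\gcd(D,dp)=1$ and each $f\in\Bcal_p$ is a newform of exact level $dp^2$ unramified above $D$, the twist $f\otimes\chi$ is a newform of conductor $Q=dp^2D^2$, and, as $f$ has trivial nebentypus and $\chi$ is real, $f\otimes\chi$ is self-dual, $\Lambda(f\otimes\chi,s)=\varepsilon(f\otimes\chi)\Lambda(f\otimes\chi,2-s)$ with $\varepsilon(f\otimes\chi)=\pm1$. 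The archimedean sign is the weight-$2$ factor $-1$; the sign at $p$ is forced by $w_{p^2}(f)=+1$ (the twist being unramified at $p$); the ramified primes dividing $D$ contribute a product of quadratic Gauss sums that collapses because $\chi$ is quadratic with $\chi(-1)=+1$ ($K$ real); only the sign at $d$ is left, so $\varepsilon(f\otimes\chi)=c\,w_d(f)$ with $c\in\{\pm1\}$ depending only on $d$ and $\chi$. The balanced approximate functional equation at $s=1$, multiplied by $\overline{a_1(f)}$, then reads
\[
\overline{a_1(f)}\,L(f\otimes\chi,1)\;=\;\bigl(1+c\,w_d(f)\bigr)\sum_{n\ge1}\frac{\chi(n)\,\overline{a_1(f)}\,a_n(f)}{n}\,V\!\left(\frac{n}{\sqrt{Q}}\right),
\]
with $V$ a fixed smooth rapidly decaying weight, $V(0)=1$, so the $n$-sum is essentially truncated at $n\ll p\sqrt d\,D\,(\log pD)^{O(1)}$ with tail $O(Q^{-A})$ for any $A$.

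Next, summing over $f\in\Bcal_p$, exchanging summations, and using $w_d(f)a_n(f)=a_n(w_df)$, the left-hand side of the theorem becomes
\[
\sum_{n\ge1}\frac{\chi(n)}{n}\,V\!\left(\frac{n}{\sqrt{Q}}\right)\left(\sum_{f\in\Bcal_p}\overline{a_1(f)}\,a_n(f)\;+\;c\sum_{f\in\Bcal_p}\overline{a_1(f)}\,a_n(w_df)\right).
\]
Here I would invoke the Petersson trace formula restricted to the $w_{p^2}=+1$ subspace of $S_2(\Gamma_0(dp^2))^{\textrm{new}}$: write the projector onto this subspace as $\tfrac12(1+w_{p^2})$, perform the old/new inclusion--exclusion down to the full spaces $S_2(\Gamma_0(M))$, $M\mid dp^2$ (the terms $M\in\{1,d\}$ dropping out because $X_0(1)$ and $X_0(d)$ have genus $0$), and apply the classical weight-$2$ Petersson formula to each. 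Because the forms are $p^2$-new one has $a_n(f)=0$ whenever $p\mid n$, so every Kloosterman modulus that appears is divisible by $p$ (by $p^2$ in the principal terms). After assembling, $\sum_{f\in\Bcal_p}\overline{a_1(f)}a_n(f)$ has principal part $2\pi\,\delta_{n=1}$ — one half of the weight-$2$ Petersson constant $4\pi$, the factor $\tfrac12$ coming from the projector — plus a Kloosterman--Bessel remainder, the old-space corrections being $O(p^{-2})$ or cancelling against the $w_{p^2}$-conjugated piece; while $\sum_{f\in\Bcal_p}\overline{a_1(f)}a_n(w_df)$, twisted by the Atkin--Lehner operator $w_d$, has no diagonal term at all, $w_d$ displacing the resonance by $d$.

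Finally, combining the pieces: the $n=1$ term of the first inner sum gives $2\pi\,V(1/\sqrt Q)=2\pi+O(Q^{-A})$, and everything else is estimated termwise with Weil's bound $|S(m,n;c)|\ll\tau(c)\sqrt c\,\sqrt{\gcd(m,n,c)}$ together with $J_1(x)\ll\min(x,x^{-1/2})$; on the relevant ranges $c\equiv0\ (p)$ and $n\ll p\sqrt d\,D(\log pD)^{O(1)}$ one has $x=4\pi\sqrt n/c\ll\sqrt D/p^{3/2}$, so $J_1(x)\ll x$, and summing the geometric-type series over $c$ and then the harmonic-type series over $n$ yields $O\!\bigl(\sqrt D(\log(D)+1)^3(\log p)^2/p^2\bigr)$: the $\sqrt D$ comes from the length of the approximate functional equation, the logarithmic powers from the dyadic decomposition in $n$, the divisor function, and the derivatives of $V$. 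Collecting, $\sum_{f\in\Bcal_p}\overline{a_1(f)}L(f\otimes\chi,1)=2\pi+O\!\bigl(\sqrt D(\log(D)+1)^3(\log p)^2/p^2\bigr)$.

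I expect the main obstacle to be the restricted Petersson step: setting up the trace formula on the Atkin--Lehner eigenspace of the new space and carrying out the combined old/new and $w_{p^2}$, $w_d$ bookkeeping carefully enough both to pin the diagonal constant down to exactly $2\pi$ (all secondary old-space contributions being $O(p^{-2})$ or cancelling) and to guarantee that every surviving Kloosterman modulus carries enough powers of $p$ to put the Bessel argument into its small-argument regime, which is what delivers the $p^{-2}$ saving with only a polylogarithmic loss in $D$ and $p$. A subsidiary difficulty is to keep all estimates linear in $\sqrt D$ rather than $D^{1+\varepsilon}$, which constrains how the conductor $D^2$ of the twist is allowed to enter both the length of the approximate functional equation and the Kloosterman sums.
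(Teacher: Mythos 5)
Your skeleton (approximate functional equation, Petersson formula restricted to the $w_{p^2}=+1$ eigenspace, diagonal term equal to $2\pi$) is indeed the paper's strategy, but the two places where you wave your hands are exactly where the content of the proof lies, and in both cases the claim you make to get past them is false. First, the old forms. You assert that the old-space corrections are ``$O(p^{-2})$ or cancel against the $w_{p^2}$-conjugated piece''. The paper's key Lemma \ref{a1Lchinewdp2} computes this correction exactly: the $d$-old part of $S_2(\Gamma_0(dp^2))^{+_{p^2}}$ does contribute $0$ (the sign of the functional equation kills $L(f\otimes\chi,1)$ there), but the $p$-old part is spanned by the vectors $f_{|A_1}+\varepsilon_f f_{|A_p}$ with $f$ an eigenform of level $dp$ and $\varepsilon_f$ its $w_p$-eigenvalue, whose Petersson norm squared is $2(p-1)\|f\|^2_{dp}$, and its contribution is exactly $\tfrac{1}{p-1}(a_1,L_\chi)_{dp}^{\chi(p)_p}$. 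Running the same machinery at level $dp$ shows that $(a_1,L_\chi)_{dp}^{\chi(p)_p}$ itself has diagonal term $2\pi$ (see \eqref{eqcasdp}), so this correction is of genuine size $2\pi/(p-1)$ --- order $1/p$, not $1/p^2$, and not absorbable in the error term you claim. There is no shortcut here: because the $w_{p^2}$-eigenspace condition does not interact transparently with a naive old/new inclusion--exclusion over the levels $M\mid dp^2$, one must compute the Gram matrix of $\{f_{|A_1},f_{|A_p}\}$ (using $\langle f_{|A_1},g_{|A_p}\rangle_{dp^2}=\langle f_{|U_p},g\rangle_{dp}=-\varepsilon_f\langle f,g\rangle_{dp}$) and the action of $w_{p^2}$ on that plane; this is precisely what forces the hypothesis $d\in\{2,3,5,7,13\}$ and produces the surviving level-$dp$ term with eigenvalue $\chi(p)$ for $w_p$. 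Dismissing it as negligible is the central gap in your argument.

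Second, the error analysis. Your claim that every Kloosterman modulus is divisible by $p$ is wrong: in the restricted formula (Proposition \ref{propformulesdestraces}) the moduli attached to the $w_{p^2}$- and $w_{dp^2}$-twisted sums satisfy $(c,p)=1$ and start at $c=d$ or $c=1$; the $p$-saving there comes from the factor $\sqrt{Q}=p$ in the denominator and inside $J_1$, not from the size of $c$. More seriously, termwise Weil bounds cannot produce $\sqrt{D}/p^2$: with the $n$-sum of length $\asymp pD$ (or the paper's $\asymp D^2p^2\log(D^2p^2)$), summing $|S(1,n\overline{Q};c)|\le\tau(c)\sqrt{c}$ and $|J_1(t)|\le t/2$ term by term over $n$ gives only $O(D\log /p)$ for the $Q=p^2$ block, short of the target by roughly a factor $\sqrt{D}\,p$. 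The indispensable extra input is cancellation in $n$: a P\'olya--Vinogradov type bound $\bigl|\sum_{n\le t}\chi(n)S(1,n\overline{Q};c)\bigr|\ll c\sqrt{D}(\log(Dc)+1)$ fed into an Abel summation against the finite total variation of $J_1(y)/y$ (which removes the length of the $n$-sum entirely), together with a separate Weil-type treatment of the single modulus $c=D$, which occurs only when $Q=N$ and where that cancellation fails. You flag the $\sqrt{D}$ issue as ``subsidiary'', but without this character-sum cancellation the proposed estimate does not close.
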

 
 \begin{rem}
 \label{remtheorem}
 \hspace*{\fill}

 $(a)$ For $d=1$ and $K$ imaginary, an analogous result (with sign $(-1)$ for $w_{p^2}$ and $4 \pi$ instead of $2 \pi$) is given in \cite{Ellenberg04} (Lemma 3.10 and the associated discussion therein), itself based on the estimates given in (\cite{Ellenberg05}, Theorem 1 and Corollary 2). We gave by similar methods a completely explicit bound for that case in \cite{LeFourn1} (Proposition 5.13 of the Appendix and its proof), but for simplicity we focus here on giving good exponents in $D$ and $p$, without explicit constants.

$(b)$ The method here is the same in principle as in \cite{Ellenberg04}, but the level $dp^2$ instead of $p^2$ involves more preliminary computations to deal with the contribution of the oldforms, which will be done in Lemma \ref{a1Lchinewdp2}. This is also why $d$ is assumed in $\{2,3,5,7,13\}$ : for now, it is the only situation where we can take out the $p^2$-old part, because $S_2(\Gamma_0(d))=0$ (see Remark \ref{remmarchepasautrescas} for further details).

$(c)$ We assume that $D$ and $dp^2$ are coprime to have a simple expression for $(f \otimes \chi)_{|w_{dp^2D^2}}$ in terms of $f_{|w_{dp^2}}$ (Lemma \ref{lemsumLfchi} $(a)$). If it is not the case, $f \otimes \chi$ is a modular form of smaller level (\cite{IwaniecKowalski}, Proposition 14.19), but the action of the (smaller level) Atkin-Lehner involution seems less natural to describe.
 \end{rem}

 As an essential tool for the proof of Theorem \ref{thmmain}, we devised a new Petersson trace formula which essentially gives a closed expression for the same weighted sum as classical Petersson trace formula on $S_2(\Gamma_0(N))$, but for the sum only on the eigenforms having prescribed eigenvalues (1 or $-1$) for the possible Atkin-Lehner involutions on $\Gamma_0(N)$. The general version is given in Proposition \ref{propformulesdestraceslaplusgenerale}, but for now, we only give the case of one prescribed eigenvalue in Proposition \ref{propformulesdestraces} below.
 

\begin{prop}[Restricted Petersson trace formula] 
\label{propformulesdestraces}
\hspace*{\fill}

Let $m,n,N$ be three positive integers, an integer $Q|N$ such that $Q>1$ with $(Q,N/Q)=1$, and $\varepsilon = \pm 1$. Let $\Bcal$ be an eigenbasis of $S_2(\Gamma_0(N))$. Then, we have 
 \begin{eqnarray}
\label{formuledestracesgeneralisee}
  \frac{1}{2 \pi \sqrt{mn}} \sum_{\substack{f \in \Bcal \\ f_{|w_Q} = \varepsilon \cdot f}} \frac{\overline{a_m(f)} a_n(f)}{\|f \|^2}   =  \delta_{mn} &  - &  2 \pi \sum_{\substack{c >0 \\ N |c} } \frac{S(m,n ;c)}{c} J_1 \left( \frac{4 \pi \sqrt{mn}}{c} \right)
 \\ & -  & 2 \pi \varepsilon \sum_{\substack{c > 0 \\ (N/Q) |c \\ (c,Q)= 1}} \frac{ S(m,nQ^{-1};c)}{c \sqrt{Q}} J_1 \left( \frac{4 \pi \sqrt{mn}}{c \sqrt{Q}} \right) \nonumber,
 \end{eqnarray}
 where $nQ^{-1}$ in the Kloosterman sum means  $n$ times the inverse of $Q$ modulo $c$ (see section \ref{sectionPeterssonrestreint} for the definitions of Kloosterman sums and the Bessel function $J_1$).
\end{prop}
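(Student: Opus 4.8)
The plan is to apply the Atkin--Lehner projector onto the $\varepsilon$-eigenspace, which splits the left-hand side into the classical Petersson sum plus one ``twisted'' sum obtained by inserting $w_Q$, and then to evaluate the twisted sum by the same Kloosterman-unfolding computation that proves the classical Petersson formula, carried out now on the Atkin--Lehner translate of a Poincaré series.

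\textbf{Step 1 (the projector).} Since $Q>1$ and $(Q,N/Q)=1$, the operator $w_Q$ is a well-defined involution of $S_2(\Gamma_0(N))$, unitary for the Petersson product and hence self-adjoint; write $\varepsilon_f\in\{\pm1\}$ for the $w_Q$-eigenvalue of $f\in\Bcal$, so that $a_n(f_{|w_Q})=\varepsilon_f a_n(f)$. From $\mathbf 1_{\varepsilon_f=\varepsilon}=\tfrac12(1+\varepsilon\varepsilon_f)$ one gets
\begin{align*}
\frac{1}{2\pi\sqrt{mn}}\sum_{\substack{f\in\Bcal\\ f_{|w_Q}=\varepsilon f}}\frac{\overline{a_m(f)}a_n(f)}{\|f\|^2}
&=\frac{1}{4\pi\sqrt{mn}}\sum_{f\in\Bcal}\frac{\overline{a_m(f)}a_n(f)}{\|f\|^2}\\
&\qquad+\frac{\varepsilon}{4\pi\sqrt{mn}}\sum_{f\in\Bcal}\frac{\overline{a_m(f)}\,a_n(f_{|w_Q})}{\|f\|^2}.
\end{align*}
By the classical Petersson trace formula for $S_2(\Gamma_0(N))$ the first sum on the right equals $\delta_{mn}-2\pi\sum_{c>0,\,N\mid c}\frac{S(m,n;c)}{c}J_1\!\left(\frac{4\pi\sqrt{mn}}{c}\right)$ — the main term and the first Kloosterman sum of the statement — so it remains to show that $T:=\sum_{f\in\Bcal}\frac{\overline{a_m(f)}\,a_n(f_{|w_Q})}{\|f\|^2}$ satisfies
\[
\frac{T}{4\pi\sqrt{mn}}=-2\pi\sum_{\substack{c>0,\ (N/Q)\mid c\\(c,Q)=1}}\frac{S(m,nQ^{-1};c)}{c\sqrt Q}\,J_1\!\left(\frac{4\pi\sqrt{mn}}{c\sqrt Q}\right).
\]

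\textbf{Step 2 ($T$ as a Fourier coefficient).} Let $P_m$ be the $m$-th weight-$2$ Poincaré series of $\Gamma_0(N)$, normalised so that $\langle g,P_m\rangle=(4\pi m)^{-1}a_m(g)$ for all $g\in S_2(\Gamma_0(N))$ (for weight $2$ this uses the standard analytic continuation, immaterial to the computation). Expanding $P_m$ in the orthogonal eigenbasis $\Bcal$ gives $P_m=\sum_{f\in\Bcal}\overline{a_m(f)}\,(4\pi m\|f\|^2)^{-1}f$; applying $w_Q$ and reading off the $n$-th Fourier coefficient yields $T=4\pi m\,a_n\!\big((P_m)_{|w_Q}\big)$, equivalently $T=16\pi^2mn\,\langle P_m,(P_n)_{|w_Q}\rangle$ by Parseval — so in particular $T$ is independent of the chosen eigenbasis, and everything reduces to the Fourier expansion of $(P_m)_{|w_Q}$, i.e.\ to a Petersson-type formula for the pair of cusps $(\infty,w_Q\infty)$.

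\textbf{Step 3 (unfolding; the heart of the proof).} Fix the Atkin--Lehner matrix $W_Q=\begin{pmatrix}Q&\beta\\ N&Q\delta\end{pmatrix}$ with $Q\delta-(N/Q)\beta=1$ (possible since $(Q,N/Q)=1$). Write $(P_m)_{|w_Q}$ as a sum over the cosets $\Gamma_\infty\backslash\Gamma_0(N)W_Q$ of weight-$2$ slashes of $z\mapsto e(mz)$, and parametrise a coset by the bottom row $(c',d')$ of a representative. One checks that $c'$ is always a \emph{nonzero} multiple of $N$, of the shape $c'=NC$ with $(C,Q)=1$; that as the coset varies, $NC$ ranges exactly over $\{NC:C\geq1,\ (C,Q)=1\}$; and that no ``identity'' coset occurs — consistently with $w_Q\infty\neq\infty$, so that the twisted sum carries no $\delta_{mn}$. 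Then extract $a_n\!\big((P_m)_{|w_Q}\big)$ exactly as in the proof of the classical Petersson formula: using $Mz=\frac{A}{NC}-\frac{Q}{NC(NCz+D)}$ for $M=\begin{pmatrix}A&B\\ NC&D\end{pmatrix}$ of determinant $Q$, group the cosets by $C$ and by the residue of $D$ modulo $NC$, perform the $D$-sum and the $z$-integral, and set $c:=NC/Q$, a positive multiple of $N/Q$ coprime to $Q$. The $D$-sum becomes $Q^{-1/2}S(m,nQ^{-1};c)$ — the inverse $Q^{-1}\bmod c$ entering because $AD\equiv Q\pmod{NC}$ — and the $z$-integral becomes a multiple of $J_1\!\big(\frac{4\pi\sqrt{mnQ}}{NC}\big)=J_1\!\big(\frac{4\pi\sqrt{mn}}{c\sqrt Q}\big)$; tracking the constants (the power of $Q$ in the weight-$2$ Atkin--Lehner slash, together with the rescaling $NC=Qc$ of the modulus) gives precisely the displayed expression for $T/(4\pi\sqrt{mn})$. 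Combined with Step 1, this proves the proposition.

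\textbf{Expected main obstacle.} The delicate part is the bookkeeping in Step 3: pinning down exactly which moduli $c$ occur, and with which multiplicity, from the coset description; handling the congruences forced by $\det M=Q$ so that the twisted Kloosterman sum $S(m,nQ^{-1};c)$ and the weight $Q^{-1/2}$ emerge cleanly; and keeping every normalising constant straight so as to land on exactly $-2\pi$. A less computational route is to quote the Petersson formula for an arbitrary pair of cusps and then only compute the generalised Kloosterman sum $S_{\infty,\,w_Q\infty}(m,n;c)$ and its set of admissible moduli, both of which are classical for Atkin--Lehner-conjugate cusps.
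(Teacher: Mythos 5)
Your proposal is correct and follows essentially the same route as the paper: the whole content is the Fourier expansion of the Atkin--Lehner translate of a weight-$2$ Poincar\'e series, obtained by unfolding over $\Gamma_\infty\backslash\Gamma_0(N)W_Q$, where the congruence $AD\equiv Q\pmod{c'}$ with $Q\mid(A,D)$ produces $S(m,nQ^{-1};c)$ at modulus $c=c'/Q$ and the archimedean integral gives the $J_1$ factor. Your Step 1 projector $\tfrac12(1+\varepsilon\varepsilon_f)$ is just a linear repackaging of the paper's use of $P_n+ (P_n)_{|w_Q}$ paired against the $\varepsilon$-eigenspace (the paper likewise gets $\varepsilon=-1$ by differencing with the classical formula), and the bookkeeping you flag as delicate in Step 3 — which moduli occur, the absence of an identity coset for $Q>1$, and the constants — is carried out in the paper exactly as you describe, with the weight-$2$ convergence issue handled via the $s\to0^+$ regularisation and Poisson summation.
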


Notice there are analogues of Proposition \ref{propformulesdestraces} already given in the litterature, but this version seems to be the most general and the easiest to use (the comparison is made in Remark \ref{remformuletraces}).
 
 The arithmetic motivation of Theorem \ref{thmmain} is the following.
 
 \begin{cor}
 \label{cormain}
	For $d \in \{2,3,5,7,13\}$ and $p \neq d$ a prime number, let 
\[
	J(d,p):=J_0(dp^2)^{p-\textrm{new}} / (w_{p^2} - 1) J_0 (dp^2)^{p-\textrm{new}}.	
\]	

	$(a)$ For $p \gg 1$, there exists a nonzero quotient abelian variety of $J(d,p)$ with only finitely many $\Q$-rational points (such a  quotient is called a `` rank zero quotient '').
	
	$(b)$ Let $K$ be a real quadratic field with discriminant $D$ prime to $d$ and Dirichlet character $\chi$. Let $J(d,p,\chi)$ be the twist of $J(d,p)$ by the extension $K/\Q$ relatively to the automorphism $[\chi(d)] w_d$, so that the points of $J(d,p,\chi)(\Q)$ correspond to the points $P$ of $J(d,p)(K)$ such that
	\[
		P^{\sigma} = \chi(d) w_d \cdot P,
	\]
where $\sigma$ is the nontrivial automorphism of $K$. 
Then, for $p \gg D^{1/4} \log(D)^3$, there is a rank zero quotient of $J(d,p,\chi)$.

$(c)$ The same results hold when replacing $J(d,p)$ by the jacobian $J'(d,p)$ of the modular curve $\xonsdp$ parametrising the triples $(E,C_d,\alpha_p)$, where $C_d$ is a cyclic subgroup of $E$ of order $d$, and $\alpha_p$ a \enquote{normaliser of nonsplit Cartan structure} on $E$. of the modular curve. To be precise, the twist $J'(d,p,\chi)$ of $J'(d,p)$ by the extension $K/\Q$ relatively to the automorphism $[\chi(d)] w_d$ also has a rank zero quotient for $p \gg D^{1/4} \log(D)^3$.

\end{cor}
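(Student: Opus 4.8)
\emph{Plan.} The plan is to derive Corollary~\ref{cormain} from Theorem~\ref{thmmain} by the standard mechanism: turn the nonvanishing of the weighted sum into the existence of a single newform with nonvanishing central twisted $L$-value; attach to it, via Eichler--Shimura, a modular abelian variety which is visibly a quotient of the (twisted) jacobian in question; and conclude with the theorem of Kolyvagin and Logachev (building on Kolyvagin and Gross--Zagier), by which such a quotient with nonvanishing central $L$-value has finite Mordell--Weil group. Since the sum in Theorem~\ref{thmmain} does not depend on the choice of Petersson-orthonormal basis, I take $\Bcal_p = \{\, g/\|g\| \,\}$ with $g$ running over the newforms of level $dp^2$ fixed by $w_{p^2}$, normalised by $a_1(g)=1$ (these are pairwise orthogonal and span the relevant space); then $\overline{a_1(g/\|g\|)}\,L((g/\|g\|)\otimes\chi,1) = \|g\|^{-2} L(g\otimes\chi,1)$, which is real because $g$, hence $g\otimes\chi$, has real Fourier coefficients. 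Theorem~\ref{thmmain} thus reads $\sum_g \|g\|^{-2} L(g\otimes\chi,1) = 2\pi + O\!\left(\frac{\sqrt D(\log D+1)^3(\log p)^2}{p^2}\right)$; for $p \gg D^{1/4}\log(D)^3$ the error term is $O(1/\log D)$, so the right-hand side is positive, and there is a newform $g$ of level $dp^2$ with $g_{|w_{p^2}}=g$ and $L(g\otimes\chi,1)\neq 0$. For part~$(a)$ one takes $K=\Q$ and $\chi=1$, so that the error term is $O((\log p)^2/p^2)$ and for $p\gg 1$ one obtains such a $g$ with $L(g,1)\neq 0$.

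\emph{The quotient; part $(a)$.} By Eichler--Shimura, $g$ gives a positive-dimensional abelian variety quotient $A_g$ of $J_0(dp^2)$ cut out by its Hecke eigensystem. Being new of level $dp^2$, $g$ is $p$-new, so the quotient map factors through $J_0(dp^2)^{p-\textrm{new}}$; being fixed by $w_{p^2}$, it kills $(w_{p^2}-1)J_0(dp^2)^{p-\textrm{new}}$, hence factors through a surjection $J(d,p) \twoheadrightarrow A_g$. Since $w_d$ commutes with $w_{p^2}$ and with the Hecke action, it descends to an endomorphism of $J(d,p)$ and acts on $A_g$ through the $w_d$-pseudo-eigenvalue of $g$, a rational number $\varepsilon_d \in \{\pm 1\}$, i.e. as the scalar $[\varepsilon_d]$. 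Part~$(a)$ is now immediate: by Kolyvagin--Logachev, $L(g,1)\neq 0$ forces $A_g(\Q)$ finite, so $A_g$ is a nonzero rank zero quotient of $J(d,p)$.

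\emph{Matching the twist; part $(b)$.} Let $K$ be real quadratic with $\gcd(D,dp^2)=1$, and set $g_\chi := g\otimes\chi$, a newform of level $dp^2 D^2$ (Remark~\ref{remtheorem}$(c)$). For weight $2$, the root number of a newform of level $M$ equals minus its $w_M$-eigenvalue; and the twisting formula for Atkin--Lehner pseudo-eigenvalues (Lemma~\ref{lemsumLfchi}$(a)$), using $\tau(\chi)^2 = \chi(-1)D$ and $\chi(-1)=1$ together with $\chi(p)^2=1$, gives that the $w_{dp^2D^2}$-eigenvalue of $g_\chi$ is $\chi(dp^2)\,\varepsilon_d = \chi(d)\,\varepsilon_d$. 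Hence the root number of $g_\chi$ is $-\chi(d)\varepsilon_d$, and since $L(g_\chi,1)\neq 0$ the functional equation forces it to equal $+1$, so $\chi(d)\varepsilon_d = -1$. Therefore $[\chi(d)]w_d$ acts on $A_g$ as $[\chi(d)\varepsilon_d]=[-1]$, and the twist of $A_g$ by $K/\Q$ relative to $[\chi(d)]w_d$ is by definition the quadratic twist $A_g^{(\chi)}$; it has $L$-function $\prod_\sigma L(g^\sigma\otimes\chi,s) = L(A_{g_\chi},s)$, so by Faltings it is $\Q$-isogenous to the modular abelian variety $A_{g_\chi}$ of $g_\chi$. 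By Kolyvagin--Logachev applied to $g_\chi$, $L(g_\chi,1)\neq 0$ makes $A_{g_\chi}(\Q)$, hence $A_g^{(\chi)}(\Q)$, finite; and twisting the $[\chi(d)]w_d$-equivariant surjection $J(d,p)\twoheadrightarrow A_g$ along $(K,[\chi(d)]w_d)$ yields a surjection $J(d,p,\chi) \twoheadrightarrow A_g^{(\chi)}$. Thus $A_g^{(\chi)}$ is a rank zero quotient of $J(d,p,\chi)$, proving $(b)$.

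\emph{Part $(c)$, and the main obstacle.} For $(c)$ one runs the same argument with $J(d,p)$ replaced by $J'(d,p)=\Jac(\xonsdp)$, using a $w_d$-equivariant $\Q$-isogeny between $J'(d,p)$ and an Atkin--Lehner eigenspace of $J_0(dp^2)^{p-\textrm{new}}$, namely the $\Gamma_0(d)$-relative form of the isogenies of Chen and of de Smit--Edixhoven relating the jacobians of split and nonsplit Cartan modular curves to the $p$-new part of $J_0(p^2)$, and then applying Theorem~\ref{thmmain} for the corresponding sign of $w_{p^2}$; the opposite-sign analogue of Theorem~\ref{thmmain}, should it be the one needed, is proved identically from Proposition~\ref{propformulesdestraces} with $\varepsilon = -1$. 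Given Theorem~\ref{thmmain}, the one delicate point throughout is precisely this twist bookkeeping: the factor $[\chi(d)]$ is built into the twisting automorphism exactly so that, once $L(g\otimes\chi,1)\neq 0$ pins down $\varepsilon_d$ via the sign of the functional equation, $[\chi(d)]w_d$ acts on $A_g$ as $[-1]$ and the twist is the honest quadratic twist with $L$-function $L(A_{g_\chi},\cdot)$; keeping track of the analogous Atkin--Lehner signs through the Chen/de Smit--Edixhoven isogeny is the corresponding subtlety in part~$(c)$.
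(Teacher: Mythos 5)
Your proof is correct and follows essentially the same route as the paper: extract from Theorem \ref{thmmain} a $w_{p^2}$-fixed newform $g$ with $L(g\otimes\chi,1)\neq 0$, observe that nonvanishing forces $g_{|w_d}=-\chi(d)\,g$ (you deduce this from the root number of $g\otimes\chi$ at level $dp^2D^2$, while the paper reads it off directly from Lemma \ref{lemsumLfchi}~$(c)$ at level $dp^2$ --- the same functional-equation fact), apply Kolyvagin--Logachev, and verify via the equivariance $\pi\circ w_d=-\chi(d)\pi$ that the quadratic twist of $A_g$ is a rational quotient of $J(d,p,\chi)$. The only point where you are vaguer than the paper is part $(c)$: the Chen/de Smit--Edixhoven isogeny invoked there is $J'(d,p)\to J(d,p)\oplus J_0(d)$, equivariant for the Hecke action and $w_d$, and since $J_0(d)=0$ for $d\in\{2,3,5,7,13\}$, part $(c)$ is an immediate consequence of $(a)$ and $(b)$ --- the sign of $w_{p^2}$ is the $+1$ already treated, so no opposite-sign analogue of Theorem \ref{thmmain} is needed.
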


The proof of Corollary \ref{cormain} (which is quite straightforward for the reader familiar with the techniques), has been displaced to the end of this paper to keep the focus on the analytic side for now. The following remark compares it with others results in the litterature.

\begin{rem}
\hspace*{\fill}

$(a)$ The part $(a)$ of Corollary \ref{cormain} has been already proven in \cite{DarmonMerel} through modular symbols for $p \geq 7$. Their result is obviously stronger, but our proof shows it can be found through analytic tools at least for large $p$ (which can be made explicit if necessary). The real interest of Corollary \ref{cormain} lies in part $(b)$ and $(c)$, and a possible application (which would require further work) to find a linear lower bound for the dimension of the winding quotient of $J(d,p)$, following the methods of \cite{IwaniecSarnak00}.

$(b)$ The restriction $d \in \{2,3,5,7,13\}$ comes from Theorem \ref{thmmain}. For $d \notin \{2,3,5,7,13\}$ prime, there is actually a natural rank zero quotient of $J'(d,p)$ (not its twist) given by the Eisenstein quotient of $J_0(d)$, because of \eqref{eqdecompjacnonsplit}. The existence of a rank zero quotient of $J(d,p)$ (or its twist $J(d,p,\chi)$) for $d \notin \{2,3,5,7,13\}$ does not seem attainable by the methods used here.

$(c)$ The application to Mazur's method for $\Q$-curves as designed in (\cite{Ellenberg05}, Proposition 3.6) is doable for large enough $p$, $K$ quadratic real and $d \in \{2,3,5,7,13\}$ with $d$ split in $K$ i.e. $\chi(d)=1$, which is the limitation exposed in (\cite{Ellenberg05}, Remark 3.7), because we obtain points $P$ such that $P^\sigma = w_d \cdot P$ (modulo torsion). For $d \notin \{2,3,5,7,13\}$ prime, the techniques do not work here, because of $(b)$ and because it would amount to proving that the jacobian of $X_0(d)/w_d$ has a rank zero quotient, which is not true if we admit the conjecture of Birch and Swinnerton-Dyer because for $f \in S_2(\Gamma_0(d))^+$, $L(f,1) =0$. Notice that this problem is related to the existence of quadratic $\Q$-curves of degree $d$ for general prime $d$, and a still open conjecture of Elkies \cite{Elkies04} states that for large enough $d$, there are none.

\end{rem}

We will begin with the useful notations in section \ref{sectionnotations}, followed by a reminder on the approximate functional equation to estimate the $L(f \otimes \chi,1)$ in section \ref{sectioneqfoncapprox}. We then prove Lemma \ref{a1Lchinewdp2} which allows us to separate the contribution of the newforms in the moment of the $a_1(f) L(f \otimes \chi,1)$ over $S_2(\Gamma_0(dp^2))^{+_{p^2}}$ in section \ref{sectionkeylemma}. In section \ref{sectionPeterssonrestreint}, we obtain the mentioned Petersson trace formula restricted to Atkin-Lehner involution spaces. Finally, we compute all the terms involved in the computation of the moment, leading to the proof of Theorem \ref{thmmain} in section \ref{sectionfinale} and conclude with the proof of Corollary \ref{cormain} there.

\section*{Acknowledgements}
I wish to thank Olga Balkanova for her helpful answers to related problems, and the anonymous referees for their careful reading and very constructive remarks about the previous versions.

\section{Notations}
\label{sectionnotations}

Let $N$ be a positive integer and $\Hcal$ be the Poincaré half-plane.

\begin{itemize}
	\item $S_2(\Gamma_0(N))$ is the complex vector space of cuspidal forms of weight 2 for $\Gamma_0(N)$, and we add the superscripts $\{+,-,\textrm{old},\textrm{new}\}$ to refer respectively to the subspaces made up of the forms $f$ such that $f _{|w_N} = f$, $f _{|w_N} = -f$, $f$ is old, $f$ is new. We will  cumulate the superscripts when it is nonambiguous, for example $S_2 (\Gamma_0(N))^{+,\textrm{old}}$ is the subspace of oldforms $f$ such that $f_{|w_N} = -f$.
	
	\item For $f \in S_2 (\Gamma_0(N))$, one has the $q$-expansion
	\[
		f(z) = \sum_{n \geq 1} a_n(f) e^{ 2 i \pi n z}  \qquad (z \in \Hcal)
	\]
and we will keep this notation $a_n(f)$ throughout. The $L$-function associated to $f$ is defined as a holomorphic series over the domain $\{ \Re(s)>2 \} $ by 
\[
	L(f,s) = \sum_{n \geq 1} \frac{a_n(f)}{n^s}.
\]

\item For $f \in S_2(\Gamma_0(N))$ and $\chi$ a Dirichlet character, the twist $f \otimes \chi$ is defined on $\Hcal$ as the series 
\[
	(f \otimes \chi)(z) = \sum_{n \geq 1} \chi(n) a_n(f) e^{ 2 i \pi n z} \qquad (z \in \Hcal)
\]
and its $L$-function on $\{ \Re(s)>2 \}$ is in the same fashion defined by the holomorphic series
\[
	L (f \otimes \chi,s) = \sum_{n \geq 1} \frac{\chi(n) a_n(f)}{n^s},
\]
which extends to a holomorphic function on $\C$ (Lemma \ref{lemsumLfchi} $(b)$ for details).

\item For every $m \in \N$, define $a_m$ the linear form associating to any modular form $f \in S_2(\Gamma_0(N))$ the coefficient $a_m(f)$, and $L_\chi : f \mt L(f \otimes \chi,1)$

	\item For $\gamma = \matabcd \in \GL_2^+ (\R)$ and $z \in \Hcal$, define 
\[
\gamma \cdot z: = \frac{a z +b}{cz +d}, \qquad j_\gamma(z) := cz + d.
\]

For any holomorphic function $f$ on $\Hcal$, let $f_{|\gamma}$ be the function on $\Hcal$ defined by  
\[
	f_{|\gamma} (z) := \frac{\det \gamma}{j_\gamma(z)^2} f( \gamma \cdot z).
\]
We recall that this defines a right action of $\GL_2^+ (\R)$, satisfying the following formulas
\begin{equation}
\label{formulesbaseactionsurHcal}
\Im (\gamma \cdot z) = \frac{ (\det \gamma) \Im z }{|j_\gamma(z)|^2}, \qquad \gamma \cdot z = \frac{a}{c} - \frac{\det \gamma}{c j_\gamma(z)}, \qquad j_{\gamma \gamma'} (z) = j_\gamma (\gamma' \cdot z) j_{\gamma'} (z). 
\end{equation}
which we will frequently use without specific mention.

\item The Petersson scalar product $\langle \cdot, \cdot \rangle_N$ on $S_2(\Gamma_0(N))$ is defined by 
\[
	\langle f,g \rangle_N = \int_{D} \overline{f(x+iy)} g(x+iy) dx dy,
\]
where $D$ is a choice of fundamental domain for the action of $\Gamma_0(N)$ on $\Hcal$.
Defined as such, the Petersson scalar product  depends on the chosen congruence subgroup, for example when $N'$ divides $N$ and $f,g \in S_2 (\Gamma_0 (N'))$, we have 
\begin{equation}
\label{eqPeterssonchangementniveau}
	\langle f,g \rangle_{N} = [\Gamma_0 (N) : \Gamma_0 (N')] \langle f,g \rangle_{N'}.
\end{equation}

\item For every positive divisor $Q$ of $N$ such that ${\textrm{gcd}}(Q,N/Q) = 1$, choose $W_Q$ a matrix of the following form: 
\begin{equation}
\label{eqdefAtkinLehner}
	W_Q := \begin{pmatrix} Q & y \\ N & Qt \end{pmatrix}, \quad y,t \in \Z, \quad \det W_Q = Q.
\end{equation}
For every $f \in S_2 (\Gamma_0(N))$, the function $f_{|W_Q}$ does not depend on the choice of $W_Q$, and the \textit{Atkin-Lehner involution of degree $Q$ on $S_2(\Gamma_0(N))$} is the corresponding involution on this space (noted $w_Q$ to emphasize its canonical nature). For $\varepsilon = \pm 1$, the space $S_2 (\Gamma_0 (N))^{\varepsilon_Q}$ is the subspace of $S_2(\Gamma_0(N))$ made up with the modular forms $f$ such that $f_{|w_Q} = \varepsilon f$, for example $S_2 (\Gamma_0(N))^{+} = S_2 (\Gamma_0(N))^{+_N}$. Note that the definition of $W_Q$ generally depends on $N$, so we will precise (unless the context is obvious) on which spaces we are considering them. For more details about these involutions, see \cite{AtkinLehner70}. In particular, notice that for $f \in S_2(\Gamma_0(N))$, 
\begin{equation}
\label{eqexplicitefwN}
	f_{|w_N} (z) = \frac{1}{N z^2} f \left( \frac{i}{Nz} \right).
\end{equation}

\item For any subspace $V$ of $S_2(\Gamma_0(N))$ and any linear forms $A,B$ on $V$, define 
\begin{equation}
	(A,B)_N^V : = \sum_{f \in \Fcal_V} \overline{A(f)}B(f),
\end{equation}
where $\Fcal_V$ is a Petersson-orthonormal basis of $V$. This defines a scalar product on $V^*$ independent of the choice of $\Fcal_V$. We will in particular denote by $(A,B)_N$ the scalar product of $A$ and $B$ on the whole space $S_2 (\Gamma_0(N))$, again adding natural superscripts corresponding to how $V$ is defined as a subspace of $S_2(\Gamma_0(N))$. For example, Theorem \ref{thmmain} is exactly reformulated as 
\begin{equation}
\label{eqthmmainreformule}
(a_1,L_\chi)_{dp^2}^{+_{p^2}, \textrm{new}} = 2 \pi + O \left( \frac{\sqrt{D}(\log(D)+1)^3 \log(p)^2}{p^2} \right).
\end{equation}

\end{itemize}

\section{The approximate functional equation}
\label{sectioneqfoncapprox}

We will recall here some necessary results to provide a way of evaluating $L(f \otimes \chi,1)$.

\begin{lem}
\label{lemsumLfchi}
	Let $\chi$ be a quadratic character of conductor $D$ and $f \in S_2(\Gamma_0(N))$ with $N$ prime to $D$.
	
	$(a)$ The twisted modular form $f \otimes \chi$ belongs to $S_2(\Gamma_0(D^2 N))$ and 
	\begin{equation}
	\label{eqtwistf}
		(f \otimes \chi)_{|w_{D^2N}} = \chi(-N) f_{|w_N} \otimes \chi.
	\end{equation}

$(b)$ 	The holomorphic series $L(f \otimes \chi, \cdot)$ extends to an holomorphic function on $\C$ and for every $x>0$, one has 

	\begin{equation}
	\label{eqapproxLfchi}
	L(f \otimes \chi,1) = \sum_{n=1}^{+ \infty} \frac{\chi(n) a_n(f)}{n} e^{- \frac{2 \pi n}{x}} - \chi(-N) \sum_{n=1}^{+ \infty} \frac{\chi(n)  a_n(f_{|w_N})}{n} e^{- \frac{2 \pi n x}{D^2 N}}.
	\end{equation}

$(c)$ In particular, if $f_{|w_N} = \chi(-N) \cdot f$, we have $L(f \otimes \chi,1) = 0$.
\end{lem}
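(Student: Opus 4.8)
The plan is to establish the three parts of Lemma~\ref{lemsumLfchi} by reducing everything to the classical theory of twisted modular forms together with the explicit formula \eqref{eqexplicitefwN} for the Atkin--Lehner involution $w_N$.

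For part~$(a)$, I would first recall that since $\chi$ has conductor $D$ coprime to $N$, the twist $f\otimes\chi$ is a cusp form of weight $2$ for $\Gamma_0(D^2N)$; this is standard and can be cited from \cite{IwaniecKowalski} (Proposition~14.19 and the surrounding discussion). The heart of the matter is the transformation law \eqref{eqtwistf} under $w_{D^2N}$. The cleanest route is via Gauss sums: write $\chi(n) = \frac{1}{\tau(\bar\chi)}\sum_{a \bmod D}\bar\chi(a) e^{2i\pi an/D}$ (using that $\chi$ is real, so $\bar\chi=\chi$ and $\tau(\chi)^2 = \chi(-1)D$), so that $(f\otimes\chi)(z)$ is a finite linear combination $\frac{1}{\tau(\chi)}\sum_{a}\chi(a) f\!\left(z + \tfrac{a}{D}\right)$. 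Then I would apply the defining matrix $W_{D^2N} = \left(\begin{smallmatrix} D^2N & y \\ D^2N & D^2Nt \end{smallmatrix}\right)$-type element to this sum, pushing the action of $w_{D^2N}$ through each translate $z\mapsto z+a/D$; the translation by $a/D$ conjugates into a translation that, combined with \eqref{eqexplicitefwN} applied to $f$, reassembles into $\chi(-N)\,(f_{|w_N}\otimes\chi)$ after a change of summation variable $a\mapsto -N a$ (which introduces exactly the factor $\chi(-N)$) and a Gauss-sum identity $\tau(\chi)\overline{\tau(\chi)} = D$, $\tau(\chi)^2 = \chi(-1)D$. The bookkeeping of which matrix representatives to pick and tracking the scalars $Q/j^2$ is the main obstacle here, but it is a finite computation with no conceptual difficulty; alternatively one can invoke a ready-made statement such as (\cite{AtkinLehner70}) or the formulas in \cite{IwaniecKowalski} and just verify the scalar $\chi(-N)$.

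For part~$(b)$, holomorphic continuation of $L(f\otimes\chi,\cdot)$ follows from part~$(a)$: $f\otimes\chi$ being a cusp form, its completed $L$-function is entire by the standard Hecke/Mellin argument, and $L(f\otimes\chi,s) = (2\pi)^s \Gamma(s)^{-1}\int_0^\infty (f\otimes\chi)(iy)\,y^{s-1}\,dy$ for $\Re(s)$ large. To get the identity \eqref{eqapproxLfchi}, I would run the standard ``unbalanced'' approximate functional equation argument: split the Mellin integral $\int_0^\infty$ at a parameter calibrated so the two pieces become $\int_{1/x}^\infty$ and $\int_0^{1/x}$ after suitable rescaling. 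Concretely, evaluate at $s=1$: $L(f\otimes\chi,1) = 2\pi\int_0^\infty (f\otimes\chi)(iy)\,dy$. On the tail $\int_{\text{(some cutoff)}}^\infty$ one expands the $q$-expansion termwise to get $\sum_n \frac{\chi(n)a_n(f)}{n} e^{-2\pi n/x}$ (choosing the cutoff so the exponential factor is exactly $e^{-2\pi n /x}$). On the remaining piece, substitute $y\mapsto 1/(D^2Ny)$ and apply the transformation law from part~$(a)$ to replace $(f\otimes\chi)$ by $\chi(-N)(f_{|w_N}\otimes\chi)$; expanding that $q$-expansion termwise yields $-\chi(-N)\sum_n \frac{\chi(n)a_n(f_{|w_N})}{n} e^{-2\pi n x/(D^2N)}$, with the minus sign and the exponent coming out of the change of variables and the weight-$2$ automorphy factor. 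The interchange of sum and integral is justified by absolute convergence (Hecke's bound $a_n(f)\ll n$, or just $a_n \ll_f n$, makes every term dominated). The only mild subtlety is that the naive splitting would leave a contribution from the region where both expansions overlap; choosing the cutoff at the self-dual point $y_0 = 1/\sqrt{D^2N}$ rescaled by $\sqrt{x}/\sqrt{x}$... — more simply, one picks the split point to be $1/x$ on one side after rescaling, so that the formula is an exact identity valid for every $x>0$ with no error term, which is the usual ``exact'' form of the approximate functional equation for an entire $L$-function.

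For part~$(c)$, this is immediate from $(b)$: if $f_{|w_N} = \chi(-N) f$, then $a_n(f_{|w_N}) = \chi(-N) a_n(f)$, so plugging into \eqref{eqapproxLfchi} the two sums become $\sum_n \frac{\chi(n)a_n(f)}{n} e^{-2\pi n/x}$ and $\chi(-N)\cdot\chi(-N)\sum_n \frac{\chi(n)a_n(f)}{n} e^{-2\pi n x/(D^2N)} = \sum_n \frac{\chi(n)a_n(f)}{n}e^{-2\pi n x/(D^2N)}$. Choosing $x = \sqrt{D^2N} = D\sqrt{N}$ makes the two exponentials identical ($e^{-2\pi n/x} = e^{-2\pi n x/(D^2N)}$ exactly when $x^2 = D^2N$), so the right-hand side vanishes and $L(f\otimes\chi,1)=0$. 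I expect part~$(a)$ (the precise scalar $\chi(-N)$ in the Atkin--Lehner transformation) to be the only place requiring real care; parts~$(b)$ and $(c)$ are then formal consequences.
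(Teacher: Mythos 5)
Your proposal is correct and follows essentially the same route as the paper: part $(a)$ is the classical twisting result (the paper simply cites it, where you sketch the standard Gauss-sum computation behind it), part $(b)$ is the usual Mellin-transform argument with the integral split at $1/x$ and the Atkin--Lehner relation applied to the piece near $0$, and part $(c)$ is the cancellation at the self-dual point $x = D\sqrt{N}$. No gaps.
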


\begin{proof}
	\hspace{\fill}
	
	$(a)$ This is a classical result, which can for example be found in (\cite{Bump}, $\mathsection$ $I.5$).

	 $(b)$ Let $M=D^2 N$. We will reprove below that for every $g \in S_2(\Gamma_0(M))$, the $L$-function of $g$ extends to $\C$ and 
\begin{equation}
\label{eqintermL}
	L(g,1) = \sum_{n=1}^{+ \infty} \frac{a_n(g)}{n} e^{- \frac{2 \pi n}{x}} - \sum_{n=1}^{+ \infty} \frac{a_n(g{|w_{M}})}{n} e^{- \frac{2 \pi n x}{M}},
\end{equation}
so that $(b)$ is a direct consequence of $(a)$ and \eqref{eqintermL}.

On $\Re(s)>2$, let us define the completed $L$-function of $g$ by 
\[
\Lambda(g,s) : = \frac{M^{s/2} \Gamma(s)}{(2\pi)^s} L(g,s).
\]
As usual, by absolute convergence, we can write 
\begin{eqnarray*}
	\Lambda(g,s) & = & M^{s/2} \sum_{n=1}^{+ \infty} a_n(g) \int_0^{+ \infty} e^{-t} \left( \frac{t}{2 \pi n} \right)^s \frac{dt}{t} \\
	& = &  M^{s/2} \sum_{n=1}^{+ \infty} a_n(g)  \int_0^{+ \infty} e^{ - 2 \pi n y} y^s \frac{dy}{y} \\ 
	& = & M^{s/2} \int_0^{+ \infty} \sum_{n=1}^{+ \infty} a_n(g) e^{ - 2 \pi n y} y^s \frac{dy}{y} \\
	& = & \int_0^{+ \infty} g(iy) (M^{1/2} y)^s \frac{dy}{y}.
\end{eqnarray*}
We choose $x>0$ and split the integral between $[1/x,+ \infty[$ and $]0,1/x]$. We obtain 
\begin{eqnarray*}
\Lambda(g,s) & = & \int_{1/x}^{+ \infty} g(iy) (M^{1/2} y)^s \frac{dy}{y} + \int_{0}^{1/x} g(iy) (M^{1/2} y)^s \frac{dy}{y} \\
& = & \int_{1/x}^{+ \infty} g(iy) (M^{1/2} y)^s \frac{dy}{y} + \int_{x/M}^{+ \infty} g \left( \frac{i}{Mt} \right) \left( \frac{1}{M^{1/2}t} \right)^s \frac{dt}{t} \quad \left(t = \frac{1}{My} \right) \\
& = & \int_{1/x}^{+ \infty} g(iy) (M^{1/2} y)^s \frac{dy}{y} + \int_{x/M}^{+ \infty} M (it)^2 g_{|w_M} (it) \left( \frac{1}{M^{1/2}t} \right)^s \frac{dt}{t}
\end{eqnarray*}
using \eqref{eqexplicitefwN}. We obtain the integral expression 
\[
	\Lambda(g,s) = M^{s/2} \int_{1/x}^{+ \infty} g(iy) y^s \frac{dy}{y} - M^{1 - s/2} \int_{x/M}^{+ \infty}  g_{|w_M} (iy) y^{2-s} \frac{dy}{y}.
\]
This immediately proves that $\Lambda(g)$ extends to an entire function satisfying the functional equation 
\begin{equation}
\label{eqfoncLambdaaux}
\Lambda(g,2-s) = - \Lambda(g_{|w_M}, s),
\end{equation}
hence $L(g,\cdot)$ extends to an entire function on $\C$.
For the central value $s=1$, we have 
\begin{eqnarray*}
	\Lambda(g,1) & = & \sqrt{M} \sum_{n=1} ^{+ \infty} a_n (g) \int_{1/x}^{+ \infty} e^{ - 2 \pi n y} dy - \sqrt{M} \sum_{n=1}^{+ \infty} a_n(g_{|w_M}) \int_{x/M}^{+ \infty} e^{ - 2 \pi n y}  dy \\
	& = & \sqrt{M}\left( \sum_{n=1} ^{+ \infty} \frac{a_n (g)}{2 \pi n} e^{ - \frac{2 \pi n}{x}} -  \sum_{n=1} ^{+ \infty} \frac{a_n (g_{|w_M})}{2 \pi n} e^{ - \frac{2 \pi nx}{M}} \right),
\end{eqnarray*}
which proves \eqref{eqintermL} as $L(g,1) = 2 \pi \Lambda(f,1) / \sqrt{M}$. 

$(c)$ This is a straightforward consequence of $(b)$ by applying \eqref{eqapproxLfchi} to $x = D \sqrt{N}$, for which the two integrals on the right cancel each other.
\end{proof}

\section{A key lemma to isolate the contribution of the newforms}
\label{sectionkeylemma}

\begin{lem}
\label{a1Lchinewdp2}
Let $d \in \{2,3,5,7,13\}$ and $\chi$ be an even Dirichlet character with conductor $D$ prime to $d$. For every prime number $p$ not dividing $dD$, we have
\begin{equation}
\label{eqa1Lchinewdp2}
(a_1,L_\chi)_{dp^2}^{+_{p^2}, {\textrm{new}}} = (a_1,L_\chi)_{dp^2}^{+_{p^2}} - \frac{1}{p-1}(a_1,L_\chi)_{dp}^{\chi(p)_{p}}.
\end{equation}

\end{lem}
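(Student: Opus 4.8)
The strategy is to decompose the space $S_2(\Gamma_0(dp^2))^{+_{p^2}}$ into its $p^2$-new and $p^2$-old parts, and to control the contribution of the old part. Since $d \in \{2,3,5,7,13\}$, the genus of $X_0(d)$ is zero, so $S_2(\Gamma_0(d)) = 0$; hence the only oldforms in $S_2(\Gamma_0(dp^2))$ come from $S_2(\Gamma_0(dp))^{\textrm{new}}$ (there are no forms of level $d$, and a form of level $d$ or $dp^2$ contributing is excluded). For a newform $g \in S_2(\Gamma_0(dp))^{\textrm{new}}$, the oldspace it generates in $S_2(\Gamma_0(dp^2))$ is spanned by $g(z)$ and $g(pz)$, i.e.\ by $g$ and $V_p g$ where $(V_p g)(z) = g(pz)$. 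First I would write down, in this two-dimensional space, how the Atkin--Lehner involution $w_{p^2}$ acts: since $p \| dp$, $g$ already has a well-defined eigenvalue $\varepsilon_p(g) = \pm 1$ under $w_p$ at level $dp$, and I would compute $(g)_{|w_{p^2}}$ and $(V_pg)_{|w_{p^2}}$ at level $dp^2$ in terms of $\varepsilon_p(g)$ and the Hecke eigenvalue $a_p(g)$ (recalling $a_p(g) = 0$ since $p^2 \nmid dp$ is false — rather $a_p(g)^2 = p$ is also false here; in fact for $g$ new of level $dp$ with $p\|dp$ one has $a_p(g) = -\varepsilon_p(g)\sqrt{p}\cdot$(nothing): the correct statement is $w_p g = -a_p(g) p^{-1/2} g$, so $a_p(g) = -\varepsilon_p(g)\sqrt p$). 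Using this, the $w_{p^2}=+1$ eigenline inside each such old block is explicitly a single line $g + \lambda V_p g$ for an explicit $\lambda$ depending only on $p$ and $\varepsilon_p(g)$.

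The heart of the computation is then to evaluate $(a_1, L_\chi)^{+_{p^2},\textrm{old}}_{dp^2}$ by summing $\overline{a_1(h)}L_\chi(h)$ over a Petersson-orthonormal basis of this old eigenspace. For the distinguished vector $h_g = g + \lambda V_p g$ in each block, I would: (i) compute $a_1(h_g) = a_1(g)$ since $a_1(V_p g) = 0$; (ii) compute $L_\chi(h_g) = L(h_g \otimes \chi, 1)$, noting $L((V_p g)\otimes\chi, s) = p^{-s}\chi(p)L(g\otimes\chi,s)$, so $L_\chi(h_g) = (1 + \lambda \chi(p)/p)L_\chi(g)$; and (iii) compute the Petersson norm $\|h_g\|^2_{dp^2}$ in terms of $\|g\|^2_{dp}$, using \eqref{eqPeterssonchangementniveau} for the index $[\Gamma_0(dp^2):\Gamma_0(dp)] = p$ together with the cross term $\langle g, V_p g\rangle$, which is a standard quantity expressible via $a_p(g) = -\varepsilon_p(g)\sqrt p$. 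Assembling these, $\overline{a_1(h_g)}L_\chi(h_g)/\|h_g\|^2_{dp^2}$ becomes a scalar multiple — depending only on $p$ and $\varepsilon_p(g) = \chi(p)$-or-not — of $\overline{a_1(g)}L_\chi(g)/\|g\|^2_{dp}$. Summing over all $g$ new of level $dp$, the blocks with $\varepsilon_p(g) = \chi(p)$ contribute (by Lemma \ref{lemsumLfchi}(c), since then $g_{|w_{dp}}$ relates to $\chi(-dp)g$ — one must check the precise sign via \eqref{eqtwistf}) while the constant works out to exactly $1/(p-1)$; the blocks with $\varepsilon_p(g) = -\chi(p)$ must be shown to contribute zero to $L_\chi$, again by Lemma \ref{lemsumLfchi}(c). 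This identifies $(a_1,L_\chi)^{+_{p^2},\textrm{old}}_{dp^2}$ with $\frac{1}{p-1}(a_1, L_\chi)^{\chi(p)_p}_{dp}$, and subtracting from the full sum gives \eqref{eqa1Lchinewdp2}.

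**Main obstacle.** The delicate point is the bookkeeping of Atkin--Lehner eigenvalues and signs: one must correctly relate the $w_p$-action at level $dp$ to the $w_{p^2}$-action at level $dp^2$ on the two-dimensional old block, get the scalar $\lambda$ and the resulting norm ratio right, and — crucially — verify that it is exactly the eigenvalue condition $\varepsilon_p(g) = \chi(p)$ that makes $L(g\otimes\chi,1)$ vanish via Lemma \ref{lemsumLfchi}(c), so that only the complementary eigenspace $S_2(\Gamma_0(dp))^{\chi(p)_p}$ survives with coefficient precisely $1/(p-1)$. I expect the constant $1/(p-1)$ to emerge from the combination of the index $p$ in \eqref{eqPeterssonchangementniveau} and the Gram determinant of $\{g, V_pg\}$, whose off-diagonal entry involves $a_p(g)/\sqrt p = \mp 1$; care is needed because the naive orthonormalization of a $2$-dimensional space must be carried out consistently with the choice that isolates the $w_{p^2}=+1$ line. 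A secondary but routine check is that no genuinely level-$d$ forms intrude, which is where $S_2(\Gamma_0(d)) = 0$ is used, and that $\chi$ being even and $D$ prime to $dp$ keeps all the twisting formulas of Lemma \ref{lemsumLfchi} applicable at every level in sight.
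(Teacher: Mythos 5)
Your treatment of the two-dimensional old blocks attached to newforms of level $dp$ is essentially the paper's argument: the paper works with $f_{|A_1}$, $f_{|A_p}$ rather than $g$, $V_pg$, computes the Gram matrix via $\langle f_{|A_1}, g_{|A_p}\rangle_{dp^2} = \langle f_{|U_p}, g\rangle_{dp}$ together with the Atkin--Lehner fact that the $U_p$- and $w_p$-eigenvalues of a $p$-new eigenform are opposite, and finds $\|f_{|A_1}+\varepsilon_f f_{|A_p}\|^2_{dp^2} = 2(p-1)$. But there are two genuine problems. First, your opening claim that $S_2(\Gamma_0(d))=0$ forces all oldforms of level $dp^2$ to come from newforms of level $dp$ is false: the divisors of $dp^2$ also include $p$ and $p^2$, and newforms of those levels produce old vectors at level $dp^2$ — the ``$d$-old'' space spanned by $h_{|A_1}, h_{|A_d}$ for $h \in S_2(\Gamma_0(p^2))$. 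This space meets the $+_{p^2}$-eigenspace nontrivially, so its contribution to $(a_1,L_\chi)$ cannot simply be omitted; it must be shown to vanish. The paper does this by noting that $w_{p^2}$ commutes with $A_1$ and $A_d$, so the $+_{p^2}$-part of the $d$-old space is generated by $h_{|A_1}, h_{|A_d}$ with $h \in S_2(\Gamma_0(p^2))^{+}$, on which $L_\chi=0$ by Lemma \ref{lemsumLfchi}$(c)$ (here $\chi(-p^2)=1$ because $\chi$ is even and $p\nmid D$), combined with $L_\chi(h_{|A_\delta})=\chi(\delta)L_\chi(h)$. The hypothesis $d\in\{2,3,5,7,13\}$ is used elsewhere: it guarantees $S_2(\Gamma_0(dp)) = S_2(\Gamma_0(dp))^{p-\mathrm{new}}$, so that every eigenform of level $dp$ entering the $p$-old computation really has opposite $U_p$- and $w_p$-eigenvalues.

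Second, your mechanism for killing the blocks with $\varepsilon_p(g)=-\chi(p)$ is wrong. Lemma \ref{lemsumLfchi}$(c)$ forces $L(g\otimes\chi,1)=0$ only when the full sign $\varepsilon_d(g)\varepsilon_p(g)$ equals $\chi(-dp)=\chi(d)\chi(p)$; the single condition $\varepsilon_p(g)=-\chi(p)$ does not imply this, since the $w_d$-eigenvalue can make the overall sign $+1$. The correct reason — already implicit in your step (ii) — is linear, not a functional-equation argument: the $w_{p^2}$-invariant vector in the block is $g_{|A_1}+\varepsilon_p(g)\,g_{|A_p}$, and since $L_\chi(g_{|A_p})=\chi(p)L_\chi(g)$, applying $L_\chi$ gives $(1+\varepsilon_p(g)\chi(p))L_\chi(g)$, which vanishes identically when $\varepsilon_p(g)=-\chi(p)$ whether or not $L_\chi(g)$ does. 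With that correction, and with the normalization of the paper (where $a_p(g)=-\varepsilon_p(g)$ for a weight-$2$ $p$-new eigenform, not $-\varepsilon_p(g)\sqrt p$), the constant $2/(2(p-1))=1/(p-1)$ does come out as you predict.
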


%
%
%
%

\begin{rem}
\label{remmarchepasautrescas}
	We made here an assumption on $d$ and a choice of eigenvalue for $w_{p^2}$. Let us discuss the (similar) reasons behind these choices.
	
$\bullet$ For the choice of sign $-_{p^2}$, either the sign of eigenvalue for $w_d$ is $-\chi(d)$, then the sign of the twisted $L$-function is $(-1)$, giving an automatic vanishing (Lemma \ref{lemsumLfchi} $(c)$), either it is $\chi(d)$, and then the proof below does not work : indeed, we could not evaluate exactly the contribution of the $d$-old space, as the formula analogous to \eqref{eqfA1gAp} for $f \in S_2(\Gamma_0(p^2)), g \in S_2(\Gamma_0(p^2))$ is 
\[
	\langle f_{|A_1}, g_{|A_d} \rangle_{dp^2} = \langle f_{|T_d}, g \rangle_{p^2},
\]
and the eigenvalues of $T_d$ on $S_2(\Gamma_0(p^2))$ are not simply $\pm 1$. Actually, as one knows that the eigenvalues of $T_d$ are of modulus bounded by $2 \sqrt{d}$ and that every $a_1(f) L_\chi(f)$ is nonnegative when $f$ is an eigenform (see \cite{Guo96}), one can easily compute that the contribution of the $d$-old forms  is bounded in absolute value by a term of the shape $O(\frac{1}{d}) (a_1,L_\chi)_{p^2}^{-}$. It is not needed in the present case, therefore we do not give more details.

$\bullet$ The number $d$ is assumed in $\{2,3,5,7,13\}$ to ensure we can evaluate the contribution of the $p$-old space in $S_2(\Gamma_0(dp))$, which is automatically trivial in this case. If $d$ is a larger prime, as the analogue of the formula \eqref{eqfA1gAp} for $f \in S_2(\Gamma_0(d), g \in S_2(\Gamma_0(d)$ is
\[
	\langle f_{|A_1}, g_{|A_p} \rangle_{dp} = \langle f_{|T_p}, g \rangle_d,
\]
we cannot obtain an exact formula such as \eqref{eqa1Lchinewdp2} for the same reason as above, but we could again bound the contribution of these old forms by some term of the shape $O(\frac{1}{p}) (a_1,L_\chi)_d$.

\end{rem}

\begin{proof}
	By definition of the newforms and oldforms, we have the orthogonal decomposition 
	\[
	S_2(\Gamma_0 (dp^2))^{+_{p^2}} = S_2(\Gamma_0 (dp^2))^{+_{p^2}, \textrm{new}} \oplus S_2(\Gamma_0 (dp^2))^{+_{p^2},\textrm{old}}	
	\]
hence 
\begin{equation}
\label{eqinterma1Lchi}
	(a_1,L_\chi)_{dp^2}^{+_{p^2}, \textrm{new}} = (a_1,L_\chi)_{dp^2}^{+_{p^2}} - (a_1,L_\chi)_{dp^2}^{+_{p^2}, \textrm{old}},
\end{equation}
so we have to relate this scalar product on the oldpart to the right term in the Lemma.

Following the notations of \cite{AtkinLehner70}, let us define, for every positive integer $\delta$, $A_\delta = \begin{pmatrix} \delta & 0 \\ 0 & 1 \end{pmatrix}$ and for every positive integers $M$ and $N$ such that $M|N$ and $\delta$ divides $N/M$, the operator ${A_\delta : S_2 (\Gamma_0(M)) \ra S_2 (\Gamma_0(N))}$
	\[
		f \mt f_{|A_\delta}.
	\]
Looking at the $q$-expansions, we immediately see that for every $\delta \geq 1$, $\left( f_{|A_\delta}\right) \otimes \chi = \chi (\delta) \left( f \otimes \chi \right)_{|A_\delta}$ hence
\begin{equation}
\label{eqLchiAdelta}
	L_\chi (f_{|A_\delta}) = \int_0^{+ \infty} (f_{|A_\delta} \otimes \chi) (iu) du = \delta \cdot \chi (\delta) \int_0^{+ \infty} (f \otimes \chi) (i \delta u) du = \chi(\delta) L_\chi(f).
\end{equation}
In particular, 
\begin{equation}
\label{eqLchidegen}
L_\chi (f_{|A_1}) = L_\chi \textrm{ and } L_\chi (f_{|A_\delta}) = 0 \textrm{ if } L_\chi (f) = 0.	
\end{equation}

By definition, the old part of $S_2(\Gamma_0 (dp^2))$ is the subspace spanned by the $f_{|A_1}, f_{|A_p}$ with $f \in S_2(\Gamma_0 (dp))$ (it is the $p$-old space) and by the $f_{|A_1}, f_{|A_d}, f \in S_2 (\Gamma_0 (p^2))$ (it is the $d$-old space). Let us begin with the $d$-old space: as $d$ and $p$ are coprime, by Lemma 26 of \cite{AtkinLehner70}, for all $f \in S_2 (\Gamma_0 (p^2))$, 
\begin{equation}
	\left( f_{|A_1} \right) _{|w_{p^2}} = \left(f_{|w_{p^2}} \right)_{|A_1}\quad {\textrm{and}} \quad \left( f_{|A_d} \right) _{|w_{p^2}} = \left(f_{|w_{p^2}} \right)_{|A_d}.
\end{equation}
In particular, $f_{|A_1}$ and $f_{|A_d}$ have the same eigenvalue for $w_{p^2}$ in $S_2(\Gamma_0(dp^2))$ as $f$ in $S_2(\Gamma_0(p^2))$, which proves that $S_2 (\Gamma_0 (dp^2))^{+_{p^2}, d-{\textrm{old}}}$ is generated by the $f_{|A_1}, f_{|A_d}$ where $f \in S_2 (\Gamma_0 (p^2))^+$. The Lemma \ref{lemsumLfchi} $(c)$ tells us in this case that $L_{\chi}(f) = 0$ because $\chi(-p^2) = 1$, hence $L_\chi$ is zero on the $d$-old space by \eqref{eqLchidegen} and 
\[
(a_1,L_\chi)_{dp^2}^{+_{p^2}, \textrm{old}} = (a_1,L_\chi)_{dp^2}^{+_{p^2},p- \textrm{old}}.
\]
We will now compute the contribution of the $p$-old space. Our hypothesis on $d$ ensures that $S_2(\Gamma_0 (dp)) = S_2 (\Gamma_0 (dp))^{p-\textrm{new}}$ because $S_2(\Gamma_0(d))=0$. Let $f$ and $g$ be two ($p$-new) eigenforms on $S_2 (\Gamma_0 (dp))$. By definition of the Petersson scalar product, we immediately obtain 
\begin{equation}
\label{produitscalaire1}
	\langle f_{|A_1}, g_{|A_1} \rangle_{dp^2} = [\Gamma_0 (dp) : \Gamma_0(dp^2)] \langle f,g \rangle_{dp} = p \langle f,g \rangle_{dp},
\end{equation}
and 
\[
\langle f_{|A_p},g_{|A_p} \rangle_{dp^2} = p^2 \int_{\Dcal} \overline{f(px + ipy)} g(px + ipy) dx dy = \int_{p \Dcal} \overline{f} (x + i y) g(x+iy) dx dy,
\]
where $\Dcal$ is a fundamental domain for $\Gamma_0 (dp^2)$. It readily implies that $p \Dcal$ is a fundamental domain for the subgroup $\Gamma$ of matrices of $\Gamma_0(d)$ which are diagonal modulo $p$, and this subgroup is of index $p$ in $\Gamma_0(dp)$ so we obtain 
\begin{equation}
\label{produitscalaire2}
\langle f_{|A_p},g_{|A_p} \rangle_{dp^2} = p \langle f,g \rangle_{dp}.
\end{equation}
Next, using again a linear change of variables, we obtain 
\begin{eqnarray*}
	\langle f_{|A_1}, g_{|A_p} \rangle_{dp^2} & = & p \int_{\Dcal} \overline{f (x+ i y)} g(p(x+iy)) dx dy \\
	& = & \frac{1}{p} \int_{p \Dcal} \overline{f ((x+ i y)/p)} g(x+iy) dx dy \\
	& = & \langle f_{|A_p^{-1}}, g \rangle_{\Gamma}
\end{eqnarray*}
with the same $\Gamma$ as above, but with Lemma 12 and notations (2.2) and (3.1) of \cite{AtkinLehner70}, as $\begin{pmatrix} 1 & j \\ 0 & 1 \end{pmatrix}$ is a system of coset  representatuives of $\Gamma_0(dp) \backslash \Gamma$ for $0 \leq j \leq p-1$ and $f$ and $f_{|A_p^{-1}}$ are both modular forms for $\Gamma$, we get
\begin{equation}
\label{eqfA1gAp}
\langle f_{|A_1}, g_{|A_p} \rangle_{dp^2} = \langle f_{|U_p}, g \rangle_{dp}
\end{equation}
and as $f$ is a $p$-new eigenform, it is an eigenform for $U_p$ and $w_p$ (Theorem 3 of \cite{AtkinLehner70}) and the eigenvalues are opposite. Defining $\varepsilon_f \in \{ \pm 1 \}$ such that 
\begin{equation}
\label{eqdefepsilonf}
f_{|w_p} = \varepsilon_f \cdot f,	
\end{equation}
we finally obtain 
\begin{equation}
\label{produitscalaire3}
\langle f_{|A_1}, g_{|A_p} \rangle_{dp^2} = - \varepsilon_f \langle f,g \rangle_{dp}.
\end{equation}

Now, let $\Bcal$ be an orthonormal eigenbasis of $S_2 (\Gamma_0(dp)) = S_2(\Gamma_0(dp))^{p-\textrm{new}}$ (for the Hecke operators $T_q, q \neq d,p$ and $U_p$). When $f$ runs through $\Bcal$, the vector spaces $\Vect (f_{|A_1}, f_{|A_p})$ are pairwise orthogonal because of the formulas \eqref{produitscalaire1}, \eqref{produitscalaire2} and \eqref{produitscalaire3}. This allows us to build from $\Bcal$ an orthonormal basis of $S_2(\Gamma_0(dp^2))^{+_{p^2},p-{\textrm{old}}}$ in the following way.
%
From Lemma 26 of \cite{AtkinLehner70}, we know that for $f \in \Bcal$, with the notation \eqref{eqdefepsilonf},
\[
	(f_{|w_p})_{|A_p} = (f_{|A_1})_{|w_{p^2}} \quad {\textrm{and}} \quad (f_{|A_p})_{|w_{p^2}} = (f_{|w_p})_{|A_1} = \varepsilon_f f_{|A_1}.
\]
An orthogonal basis of $S_2(\Gamma_0(dp^2))^{+_{p^2},p-{\textrm{old}}}$ is then made up with the
\[
	f_{|A_1} + \left( f_{|A_1} \right)_{|w_{p^2}} =  f_{|A_1} + \varepsilon_f f_{|A_p} , \quad f \in \Bcal.
\]
For $f \in \Bcal$, we know from the formulas \eqref{produitscalaire1}, \eqref{produitscalaire2} and \eqref{produitscalaire3} that: 
\[
	\langle f_{|A_1} + \varepsilon_f f_{|A_p}, f_{|A_1} + \varepsilon_f f_{|A_p} \rangle_{dp^2} = (2 p - 2 \varepsilon_f^2) \langle f,f \rangle_{dp} = 2 (p-1). 
\]
To summarize, an orthonormal basis of $S_2(\Gamma_0 (dp^2))^{+_{p^2}, p-\textrm{old}}$ is obtained by taking the elements $f$ of $\Bcal$ and considering the $(f_{|A_1} + \varepsilon_f f_{|A_p})/\sqrt{2(p-1)}$. Finally, by \eqref{eqLchidegen},
\[
	\overline{a_1} (f_{|A_1} + \varepsilon_f f_{|A_p}) L_\chi (f_{|A_1} + \varepsilon_f f_{|A_p}) = \overline{a_1(f)} (L_\chi (f) + \varepsilon_f \chi(p) L_\chi(f)),
\]
in particular the left term is zero when $\varepsilon_f = - \chi(p)$. Summing this over all $f \in \Bcal$ such that $\varepsilon_f = \chi(p)$ and after orthornormalisation, we get 
\[
	(a_1,L_\chi)_{dp^2}^{+_{p^2},p-{\textrm{old}}} = \frac{1}{p-1} (a_1,L_\chi)_{dp}^{\chi(p)_p},
\]
which proves the Lemma.
\end{proof}

We now need to calculate both terms on the right of \eqref{eqa1Lchinewdp2}, and to do this we will use a version of Petersson trace formula in the next section.

\section{The semi-orthogonality relation with respect to Atkin-Lehner involutions}
\label{sectionPeterssonrestreint}

Let us begin with the necessary definitions for the trace formulas.

\begin{defi}[Kloosterman sums and Bessel function]
\label{KloostermanBessel}
\hspace*{\fill} 

For all positive integers $m,n,c$, the \textit{Kloosterman sum associated to } $m,n,c$ is defined by 
\[
	S(m,n;c) = \sum_{k \in (\Z /c \Z)^*} e^{ 2 i \pi (m k + n k^{-1})/c}.
\]
The Kloosterman sums satisfy the Weil bounds (\cite{IwaniecKowalski}, Corollary 11.12): 
\begin{equation}
\label{bornesdeWeilKloosterman}
|S (m,n ;c) |\leq (m,n,c) ^{1/2} \tau(c) \sqrt{c},
\end{equation}
with $(m,n,c)$ the gcd of $m$,$n$ and $c$ and $\tau(c)$ the number of positive divisors of $c$. 
%

The \textit{Bessel function of the first kind and order 1} is the entire function $J_1$ defined by  
\[
J_1 (z) = \sum_{n=0}^{+ \infty} \frac{(-1)^n}{n ! (n+1) !} \left( \frac{z}{2} \right)^{2 n +1}.
\]
It has the following integral representation (\cite{WatsonBessel}, 6.21, Formula 8)
\begin{equation}
\label{representationintegraleJ1}
	J_1 (z) = \frac{z}{4 i \pi} \int_{x - i\infty}^{x + i \infty} \frac{e^{w - \frac{z^2}{4w}}} {w^2} dw 
\end{equation}
for all $z \in \C$ and all $x>0$.
\end{defi}

The goal of this subsection is to prove Propositions \ref{propformulesdestraces} and \ref{propformulesdestraceslaplusgenerale}. With our notations, the left-hand term of \eqref{formuledestracesgeneralisee} is exactly 
\[
\frac{1}{2 \pi \sqrt{mn}}(a_m,a_n)_N^{\varepsilon_Q}.
\]

Before the proof, let us make some remarks about Proposition \ref{propformulesdestraces}.

\begin{rem}
\hspace*{\fill}
\label{remformuletraces}

$(a)$ Summing for any $Q$ the formulas for $\varepsilon = 1$ and $\varepsilon = -1$, we recover the original Petersson trace formula (\cite{IwaniecKowalski}, Proposition 14.5), which generalises to every weight $k \geq 2$. However, its proof for $k=2$ is more involved because the Poincaré series cannot be defined as uniformly convergent series so we will focus on this case (it is also the only one we need), but it is very likely to be generalised to $k>2$ as well. The trace formula above has been originally proved for $Q=N$ in the Chapter 3 of \cite{Akbary}, but to our knowledge, not for any other $Q$.

$(b)$ For $Q=N$ prime, formula \eqref{formuledestracesgeneralisee} can be found (in a different form) in (\cite{IwaniecKowalski}, Proposition 14.25). Notice that there is a mistake in one of the arguments of $J_1$ in that book, as it should actually be rewritten (with our notations) for prime level $q$ and $m,n \geq 1$ : 
\begin{eqnarray}
\label{eqPeterssonrestreintversionIK}
	\frac{(a_m,a_n)_q^{\varepsilon}}{2 \pi \sqrt{mn}}  = \delta_{mn} - 2 \pi \sqrt{q} \delta_{m,nq} & - & 2 \pi \sum_{q|c} \frac{S(m,n;c)}{c} J_1 \left( \frac{4 \pi \sqrt{mn}}{c} \right) \\
	& + & 2 \pi \varepsilon \sqrt{q} \sum_{q|c} \frac{S(m,nq;c)}{c} J_1 \left(\frac{4 \pi \sqrt{mnq}}{c}  \right) \nonumber.
\end{eqnarray}
The proof of this result is simply based on the natural system of formulas (which crucially uses Theorem 3 of \cite{AtkinLehner70} hence the hypothesis $q$ prime)
\begin{eqnarray*}
	(a_m,a_n)_q & = & (a_m,a_n)_q^{+} + (a_m,a_n)_q^{-} \\
	(a_m,a_{nq})_q & = & - (a_m, a_n)_q^+ + (a_m,a_n)_q^-
\end{eqnarray*}
combined with the original Petersson trace formula. Notice there is a $\delta_{m,nq}$ appearing here but not in \eqref{formuledestracesgeneralisee} so even under this form, the fact that formulas \eqref{formuledestracesgeneralisee} and \eqref{eqPeterssonrestreintversionIK} are the same is not obvious. To see it, we use that for $c \geq 1$ such that $q || c$, 
\[
	S(m,nq;c) = - S(m, n q^{-1}; c/q)
\]
(e.g. by Theorem 68 of \cite{HardyWright}), and that for all $m,n \geq 1$, we can check (separating between oldforms and newforms) that
\[
0 = (a_m,a_{nq})_{q^2} = \delta_{m,nq} - 2 \pi \sum_{q^2 | c} \frac{S(m,nq;c)}{c} J_1 \left( \frac{4 \pi \sqrt{mnq}}{c} \right),
\]
and we readily obtain the equivalence using these two results.
\end{rem}

To prove \eqref{formuledestracesgeneralisee}, we will use the Poincaré series in weight 2, whose classical properties are recalled below (\cite{IwaniecKowalski}, Lemma 14.2 and \cite{Rankin}, section 5.7).

\begin{defi}[Poincaré series of weight 2]
\hspace*{\fill}

	For every positive integers $n,N$ there are cuspidal forms of weight 2 for $\Gamma_0(N)$ denoted by $P_n (\cdot,N)$ and called  \textit{Poincaré series of weight 2} such that
	
	$(a)$ The Poincaré series $P_n(\cdot, N)$ is the uniform limit on every compact subset of $\Hcal$ when $s \ra 0^+$ of the series $P_n(\cdot,s,N)$ defined by 
	\[
		P_n(z,s,N) : = \sum_{\gamma \in \Gamma_\infty \backslash \Gamma_0 (N)} \frac{e^{ 2 i \pi n \gamma \cdot z}}{j_\gamma(z)^2 |j_\gamma(z)|^{2s}}.
	\]
	These series satisfy by uniform convergence the transformation formula
	\[
		P_n (\gamma \cdot z,s,N) = j_\gamma (z)^2 |j_\gamma(z)|^{2s} P_n(z,s,N).
	\]
	$(b)$ For every  $m>0$, 
	\begin{equation}
	a_m (P_n (\cdot,N)) =  \delta_{mn}- 2 \pi \left( \sqrt{\frac{m}{n}} \sum_{\substack{c >0 \\ N |c}}  \frac{ S(m,n;c) }{c} J_1 \left( \frac{4 \pi \sqrt{mn}}{c} \right) \right).
	\end{equation}
	
	$(c)$ For every $f \in S_2 (\Gamma_0(N))$, 
	\[
	\langle f,P_n (\cdot,N) \rangle_N  = \frac{\overline{a_n(f)}}{4 \pi n}.
	\]
\end{defi}

%
%
%
%
%

The essential result to prove for our trace formula is the following. 

\begin{prop}
	For every positive integers $m,n,N$ and every divisor $Q>1$ of $N$ such that $(Q,N/Q)= 1$, 
	\begin{equation}
	\label{eqFouriercoeffPnwQ}
	a_m(P_n (\cdot,N)_{|w_Q}) = 	
 	- 2 \pi \sqrt{\frac{m}{n}} \sum_{m \geq 1} \sum_{\substack{c>0 \\ (N/Q) |c \\(Q,c) = 1}} \frac{S(m,nQ^{-1} ;c)}{c \sqrt{Q}}J_1 \left( \frac{4 \pi \sqrt{mn}}{c\sqrt{Q}} \right).
 	\end{equation}
\end{prop}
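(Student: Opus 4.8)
The plan is to mimic the classical computation of the Fourier coefficients of a Poincaré series, but now unfolding against the Atkin-Lehner matrix $W_Q$. First I would pick a concrete representative $W_Q = \begin{pmatrix} Q & y \\ N & Qt \end{pmatrix}$ with $\det W_Q = Q$ (so $Q^2 t - Ny = Q$, hence $(Q,N/Q)=1$ makes such a choice possible), and work with the regularized series $P_n(z,s,N)$, sending $s \to 0^+$ only at the end by uniform convergence. Writing $P_n(\cdot,N)_{|w_Q}(z) = Q\, j_{W_Q}(z)^{-2} P_n(W_Q \cdot z, N)$, I substitute the defining sum over $\gamma \in \Gamma_\infty \backslash \Gamma_0(N)$ and use the cocycle relation $j_{\gamma W_Q}(z) = j_\gamma(W_Q \cdot z) j_{W_Q}(z)$ to get a sum over $\gamma W_Q$. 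The key combinatorial point is to identify the set $\Gamma_\infty \backslash \Gamma_0(N) W_Q$: since $W_Q$ normalizes $\Gamma_0(N)$, this is a set of matrices of determinant $Q$ whose bottom rows are, up to the left $\Gamma_\infty$-action, parametrized by pairs $(c,d)$ with $c>0$, $(N/Q) \mid c$, $(c,Q)=1$, together with $d$ running over a residue system; the congruence conditions $(N/Q)\mid c$ and $(Q,c)=1$ are exactly what survives from "$N \mid$ bottom-left entry" after multiplying by $W_Q$ and clearing the common factor. I would isolate the contribution of the (finitely many) terms with $c=0$ — these produce the $\delta$-type term in the untwisted case, but here, because $Q > 1$ forces $c \neq 0$ in the relevant coset space, there is no such term, which is why the right-hand side of \eqref{eqFouriercoeffPnwQ} has no Kronecker delta.

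Next I would compute the $m$-th Fourier coefficient $a_m = \int_0^1 (\cdots) e^{-2\pi i m x}\,dx$ term by term. For each allowed $c$, grouping the $d$'s modulo $c$ produces a Kloosterman-type exponential sum; the determinant $Q$ and the inverse of $Q$ modulo $c$ enter through the formula $\gamma \cdot z = \frac{a}{c} - \frac{\det \gamma}{c\, j_\gamma(z)}$ applied with $\det = Q$, so that $a/c \bmod 1$ contributes $e^{2\pi i (m \cdot \ast + n \cdot \overline{Q}\,\ast)/c}$ after the change of variables — yielding $S(m, nQ^{-1}; c)$ with $nQ^{-1}$ meaning $n$ times the inverse of $Q$ mod $c$, precisely as in the statement. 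The remaining $x$-integral over $\R$ (after unfolding the $x$-variable to the whole line) is the standard Bessel integral: using the integral representation \eqref{representationintegraleJ1} of $J_1$, or equivalently the known formula $\int_{-\infty}^{\infty} (x+i/s)^{-2} e^{-2\pi i m x}\, e^{2\pi i n \alpha/(c^2(x+i/s))}\,dx \to$ a constant times $J_1$, one obtains $J_1\!\left(\frac{4\pi\sqrt{mn}}{c\sqrt{Q}}\right)$; the extra $\sqrt{Q}$ in the denominator and in the normalizing factor $1/(c\sqrt{Q})$ is tracked carefully by keeping $\det W_Q = Q$ throughout rather than normalizing to determinant $1$. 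Taking $s \to 0^+$ gives the claimed identity.

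The main obstacle I expect is the bookkeeping in the second step: correctly describing the coset space $\Gamma_\infty \backslash \Gamma_0(N) W_Q$ and matching the congruence conditions $(N/Q)\mid c$, $(c,Q)=1$, while simultaneously keeping track of all the powers of $Q$ coming from $\det W_Q = Q$ and the factors $|j_{W_Q}(z)|^{2s}$, which must cancel consistently. A secondary subtlety is justifying the term-by-term integration and the passage $s\to 0^+$; this is handled exactly as in the untwisted Poincaré series case (\cite{IwaniecKowalski}, Lemma 14.2), since the absolute convergence for $\Re(s)>0$ and the local uniformity in $s$ are not affected by composing with the fixed matrix $W_Q$. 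The note "$\sum_{m\geq 1}$" appearing on the right-hand side of \eqref{eqFouriercoeffPnwQ} is a typo for a single fixed $m$; the proof yields the formula for each fixed $m \geq 1$ separately.
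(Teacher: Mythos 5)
Your proposal is correct and follows essentially the same route as the paper: choose a concrete $W_Q$ of determinant $Q$, work with the regularized series $P_n(\cdot,s,N)$, identify $\Gamma_\infty\backslash\Gamma_0(N)W_Q$ with the determinant-$Q$ matrices satisfying the stated divisibility conditions (noting $Q>1$ kills the $c=0$, hence the delta, term), extract the Kloosterman sum $S(m,nQ^{-1};c)$ from the sum over $a,d$, evaluate the unfolded $x$-integral via the integral representation of $J_1$, and let $s\to 0^+$. The only cosmetic difference is that the paper runs the Fourier extraction through an explicit Poisson summation on the $d$-sum and reindexes $c\mapsto c/Q$ at the very end, whereas you parametrize by the reduced modulus from the start; these are equivalent, and your remark that the displayed $\sum_{m\geq 1}$ is a typo is also correct.
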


Let us explain first why this implies the trace formula. Define 
\[
	P_n^{+_Q} (\cdot,N) := P_n (\cdot,N) + P_n (\cdot,N)_{|w_Q}.
\]
It belongs to $S_2(\Gamma_0 (N))^{+_Q}$, and for any $f \in S_2(\Gamma_0 (N))^{+_Q}$, as $w_Q$ is self-adjoint, we have 
\[
	\langle f, P_n^{+_Q} (\cdot,N) \rangle = \langle f,P_n (\cdot,N) \rangle + \langle f, P_n (\cdot,N)_{|w_Q} \rangle =   \langle f,P_n (\cdot,N) \rangle + \langle f_{|w_Q}, P_n(\cdot,N) \rangle = 2 \langle f,P_n (\cdot,N) \rangle.
\]
Hence, for $\Fcal_{N,Q}$ an orthonormal basis of $S_2(\Gamma_0 (N))^{+_Q}$, the property $(c)$ of Poincaré series gives us 
\[
P_n^{+_Q} (\cdot,N) = 
	 \sum_{f \in \Fcal_{N,Q}} \langle f,P_n^ {+_Q} \rangle f = 2 \sum_{f \in \Fcal_{N,Q}} \langle f,P_n \rangle f = \sum_{f \in \Fcal_{N,Q}} \frac{\overline{a_n(f)}}{2 \pi n} f ,
\]
and the property $(b)$ of Poincaré series together with Proposition \ref{propformulesdestraces} give us, by identification of Fourier coefficients, for every $m >0$: 
\begin{eqnarray*}
	\frac{\left(a_n,a_m\right)_N^{+_Q}}{2 \pi n}  = \delta_{mn}& - & 2 \pi  \sqrt{\frac{m}{n}} \sum_{\substack{c >0 \\ N |c}}  \frac{ S(m,n;c) }{c} J_1 \left( \frac{4 \pi \sqrt{mn}}{c} \right) \\ & - & 2 \pi \sqrt{\frac{m}{n}} \sum_{m \geq 1} \sum_{\substack{c>0 \\ (N/Q) |c \\(Q,c) = 1}} \frac{S(m,nQ^{-1} ;c)}{c \sqrt{Q}}J_1 \left( \frac{4 \pi \sqrt{mn}}{c\sqrt{Q}} \right),
\end{eqnarray*}
hence the trace formula for $Q$ and $\varepsilon=1$. We can obtain the trace formula for $Q$ and $\varepsilon=-1$ by the same means or by difference with the usual Petersson trace formula, as mentioned earlier.

\begin{rem}
\label{remformuletracesPeterssonrestreintecombinee}
Actually, the same argument gives us the combined Petersson trace formula (which will be useful in section \ref{sectionfinale}), written below.
\end{rem}

\begin{prop}[Restricted Petersson trace formula with multiple eigenvalues]
\label{propformulesdestraceslaplusgenerale}
\hspace*{\fill}

Let $m,n,N$ be three fixed positive integers. Let $E$ be a group morphism from a subgroup $H$ of the group $W$ of Atkin-Lehner involutions on $N$ (identified as the set of $Q |N$ such that $(Q,N/Q)=1$ below) to $\{pm 1 \}$. For every $Q \in W$, let us define 
\[
	S_Q =  2 \pi \sqrt{\frac{m}{n}} \sum_{m \geq 1} \sum_{\substack{c>0 \\ (N/Q) |c \\(Q,c) = 1}} \frac{S(m,nQ^{-1} ;c)}{c \sqrt{Q}}J_1 \left( \frac{4 \pi \sqrt{mn}}{c\sqrt{Q}} \right).
\]
and for $\Bcal$ an eigenbasis of $S_2(\Gamma_0(N))$,
\[
	(a_m,a_n)_N^E := \sum_{\substack{f \in \Bcal \\ \forall Q \in H,\\ f_{|w_Q} = E(Q) f }} \frac{\overline{a_m(f)}a_n(f)}{\|f\|^2}.
\]

Then, we have 
\[
	\frac{|E|}{4 \pi \sqrt{mn}} (a_m,a_n)_N^{E} := \delta_{mn} - \sum_{Q \in H} E(Q) S_Q 
\]
\end{prop}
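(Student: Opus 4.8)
The plan is to mimic exactly the argument sketched right after Proposition~\ref{propformulesdestraces} for the case of a single eigenvalue, but carried out with an averaging operator over the whole subgroup $H$ rather than the single operator $\tfrac12(1+w_Q)$. First I would introduce the generalized Poincaré series
\[
	P_n^{E}(\cdot,N) := \sum_{Q \in H} E(Q)\, P_n(\cdot,N)_{|w_Q},
\]
where the $Q=1$ term is just $P_n(\cdot,N)$ itself (since $E$ is a group morphism, $E(1)=1$). Because the $w_Q$ for $Q \in H$ form a finite abelian group of order $|H|=|E|$ acting on $S_2(\Gamma_0(N))$, and $E\colon H \to \{\pm1\}$ is a character, the operator $\frac{1}{|E|}\sum_{Q\in H} E(Q)\, w_Q$ is the orthogonal projector onto the $E$-isotypic subspace $V_E := \{f : f_{|w_Q} = E(Q) f \ \forall Q \in H\}$; this uses that each $w_Q$ is self-adjoint for the Petersson product (as recalled in the text) and that the $w_Q$ commute. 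Hence $P_n^{E} = |E| \cdot (\text{projection of } P_n \text{ onto } V_E)$ up to the scalar already built into the sum.

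Next I would compute $\langle f, P_n^{E}(\cdot,N)\rangle_N$ for $f \in V_E$. Self-adjointness of $w_Q$ gives $\langle f, P_n(\cdot,N)_{|w_Q}\rangle_N = \langle f_{|w_Q}, P_n(\cdot,N)\rangle_N = E(Q)\langle f, P_n(\cdot,N)\rangle_N$, so summing over $Q \in H$ with the weights $E(Q)$ and using $E(Q)^2=1$ yields $\langle f, P_n^{E}(\cdot,N)\rangle_N = |E|\,\langle f, P_n(\cdot,N)\rangle_N = |E|\,\overline{a_n(f)}/(4\pi n)$ by property $(c)$ of the Poincaré series. Since $P_n^{E}(\cdot,N) \in V_E$, expanding it in an orthonormal basis $\Fcal_{N,E}$ of $V_E$ gives
\[
	P_n^{E}(\cdot,N) = \sum_{f \in \Fcal_{N,E}} \frac{|E|\,\overline{a_n(f)}}{4\pi n}\, f.
\]
Now I take the $m$-th Fourier coefficient of both sides. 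On the right this produces $\frac{|E|}{4\pi n}\sum_{f\in\Fcal_{N,E}} \overline{a_n(f)}a_m(f) = \frac{|E|}{4\pi n}(a_n,a_m)_N^{E}$, and one checks this equals $\frac{|E|}{4\pi n}(a_m,a_n)_N^{E}$ after the usual real-coefficient/complex-conjugation bookkeeping (or one simply runs the argument with roles of $m,n$ interchanged and symmetrizes, exactly as in the single-eigenvalue case). On the left, $a_m(P_n^{E}(\cdot,N)) = a_m(P_n(\cdot,N)) + \sum_{Q \in H,\, Q>1} E(Q)\, a_m(P_n(\cdot,N)_{|w_Q})$; property $(b)$ of the Poincaré series handles the first term, giving $\delta_{mn} - 2\pi\sqrt{m/n}\sum_{N|c} \frac{S(m,n;c)}{c}J_1(\tfrac{4\pi\sqrt{mn}}{c})$, which is $\delta_{mn} - S_1$ in the notation of the proposition, while equation~\eqref{eqFouriercoeffPnwQ} gives $a_m(P_n(\cdot,N)_{|w_Q}) = -S_Q \cdot (n/m)^{?}$... more precisely it is exactly $-S_Q$ in the normalization where $S_Q$ is as defined in the statement (both carry the same $\sqrt{m/n}$ factor), so the left side is $\delta_{mn} - \sum_{Q\in H} E(Q) S_Q$. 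Combining the two sides and multiplying through by $n/|E|$ (and absorbing the $\sqrt{mn}$ factor from the $S_Q$ normalization into the $4\pi\sqrt{mn}$ denominator) yields the claimed identity $\frac{|E|}{4\pi\sqrt{mn}}(a_m,a_n)_N^{E} = \delta_{mn} - \sum_{Q\in H} E(Q) S_Q$.

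The only genuine subtlety — and the step I would be most careful about — is the claim that $\frac{1}{|E|}\sum_{Q\in H}E(Q)w_Q$ is the orthogonal projector onto $V_E$, i.e.\ that summing the $E(Q)$-weighted Poincaré series really does isolate precisely the $f$ with $f_{|w_Q}=E(Q)f$ for all $Q\in H$ and kills everything else. This rests on: (i) the $w_Q$, $Q\mid N$ with $(Q,N/Q)=1$, generate a finite abelian $2$-group under composition with $w_{Q_1}w_{Q_2} = w_{Q_1Q_2/(Q_1,Q_2)^2}$, so $S_2(\Gamma_0(N))$ decomposes as an orthogonal direct sum of simultaneous eigenspaces indexed by characters of this group; and (ii) orthogonality of characters then shows the weighted average projects onto the $E$-eigenspace. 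Once this is in place the rest is a line-by-line transcription of the single-$Q$ argument; no new analytic input (no new estimates on $J_1$ or Kloosterman sums) is needed beyond what is already used for Proposition~\ref{propformulesdestraces} and equation~\eqref{eqFouriercoeffPnwQ}. I would also remark that for $H = \{1,Q\}$ this recovers Proposition~\ref{propformulesdestraces} with $\varepsilon = E(Q)$, as a sanity check.
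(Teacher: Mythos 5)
Your proposal is correct and follows essentially the same route as the paper: the paper likewise defines $P_n(\cdot,N)^E := \sum_{Q\in H} E(Q)\,P_n(\cdot,N)_{|w_Q}$, checks it lies in the $E$-eigenspace, computes $\langle f,P_n(\cdot,N)^E\rangle = |E|\,\overline{a_n(f)}/(4\pi n)$ for $f$ with the prescribed signs via self-adjointness of the $w_Q$, and concludes by expanding over a basis of that eigenspace and identifying $m$-th Fourier coefficients using \eqref{eqFouriercoeffPnwQ}. The projector/character-orthogonality justification you single out as the delicate point is exactly what the paper compresses into ``by construction'' plus the choice of $\Bcal^E$, so no new content is needed.
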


\begin{proof}
	Let us define
	\[
		P_n(\cdot,N)^E := \sum_{Q \in H} E(Q) P_n(\cdot,N)_{|w_Q}
	\]
By construction, for every $Q \in H$, one has $P_n(\cdot,N)^E_{|w_Q} = E(Q) P_n(\cdot,N)^E$. Now, let $\Bcal^E$ be the subset of $\Bcal$ made up with the eigenforms $f$ having the good signs for the morphism $E$. For every $f \in \Bcal^E$ :
\[
	\langle f,P_n(\cdot,N)^E \rangle = \sum_{Q \in H} E(Q) \langle f,P_n (\cdot,N)_{|w_Q} \rangle  = \sum_{Q \in H} E(Q) \langle f_{|w_Q}, P_n(\cdot,N) \rangle = |E| \langle f,P_n(\cdot,N) \rangle,
\]
hence by the properties of Poincaré series, 
\[
	\langle f,P_n(\cdot,N)^E \rangle = \frac{|E| \overline{a_n(f)}}{4 \pi n}.
\]

Now, $\Bcal^E$ is an orthogonal basis of $S_2(\Gamma_0(N))^E$, and decomposing $P_n(\cdot,N)^E$ on the basis $\Bcal^E$ gives Proposition \ref{propformulesdestraceslaplusgenerale} by identification of $m$-th Fourier coefficients on both sides, using \eqref{eqFouriercoeffPnwQ}.
\end{proof}

Let us now prove the Proposition on Poincaré series.

\begin{proof}
	Let us choose an Atkin-Lehner involution matrix $W_Q = \begin{pmatrix} Q & y \\ N & Q t \end{pmatrix}$ with $y,t \in \Z$ and $Qt - (N/Q)y = 1$. We will compute the Fourier coefficients of $P_n(\cdot,s,N)_{|W_Q}$: here, this depends on the choice of $W_Q$ because $P_n(\cdot,s,N)$ is \textit{not} a modular form.
	
	For any $s>0$, 
	\begin{eqnarray*}
	P_n (\cdot,s,N)_{|W_Q} (z) & = & \frac{\det W_Q}{j_{W_Q} (z)^2} P_n (W_Q \cdot z, s,N)  \\
	& = & \frac{Q}{j_{W_Q} (z)^2} \sum_{\gamma \in \Gamma_\infty \backslash \Gamma_0(N)} \frac{e^{2 i \pi n \gamma W_Q z}}{j_\gamma (W_Q z)^2 |j_\gamma (W_Q z)|^{2s}} \\
	& = & Q |j_{W_Q} (z)|^{2s} \sum_{\gamma \in \Gamma_\infty \backslash \Gamma_0(N) W_Q} \frac{e^{2 i \pi n \gamma z}}{j_\gamma (z)^2 |j_\gamma(z)|^{2s}}.
\end{eqnarray*}
 Now, for every $\matabcd \in \Gamma_0(N)$,
 \[
 	 \matabcd W_Q = \begin{pmatrix} aQ + bN & a y + bQ t \\ c Q + d N & c y + d Q t \end{pmatrix},
 \]
so it belongs to the set of matrices $\begin{pmatrix} a' & b' \\ c' & d' \end{pmatrix}$ with integer coefficients such that $N$ divides $c'$, $Q$ divides $a'$ and $d'$, and with determinant $Q$. Actually, this set is exactly $\Gamma_0(N) W_Q$ as we check immediately by multiplication by $W_Q^{-1}$, and for $Q>1$, $c'$ is necessarily nonzero, so $\Gamma_\infty \backslash \Gamma_0(N) W_Q$ is in natural bijection with the set $\Rcal_{N,Q}$ of triples $(a,c,d)$ of integers such that $c>0, N|c$, $Q|(a,d)$, $ad = Q \mod c$ and $0 \leq a < c$. Moreover, for $\gamma = \matabcd \in \Gamma_0(N) W_Q$ built from such a triple $(a,c,d)$, 
\[
	\gamma \cdot z  = \frac{a}{c} - \frac{Q}{c (cz+d)}
\]
hence
\begin{eqnarray*}
	P_n (\cdot,s,N)_{|W_Q} (z) & = & Q |j_{W_Q} (z)|^{2s} \sum_{(a,c,d) \in \Rcal_{N,Q}} \frac{e^{ 2 i \pi n a/c} e^{- 2 i \pi \frac{nQ}{c(cz+d)}}}{(cz+d)^2 |cz+d|^{2s}} \\
	& = & Q |j_{W_Q} (z)|^{2s} \sum_{\substack{c>0 \\ N |c \\ (Q,c/Q) = 1}} \frac{1}{c^{2 + 2s}} \sum_{\substack{a \\ 0 \leq a < c \\ Q |a}} e^{ 2 i \pi n a/c} \sum_{\substack{d \\ Q|d \\ ad \equiv Q [c]}} \frac{e^{- \frac{2 i \pi n Q}{c^2 (z+ d/c)}} }{(z+d/c)^2 |z+d/c|^{2s}}.
\end{eqnarray*}
For fixed $a$ and $c$, the set of $d$ satisfying the property in the second sum is a congruence class modulo $c$, so we can choose its representative $d'$ between 0 and $c-1$, therefore  
\[
	\sum_{\substack{d \\ Q|d \\ ad \equiv Q [c]}} \frac{e^{- \frac{2 i \pi nQ}{c^2 (z+ d/c)}} }{(z+d/c)^2 |z+d/c|^{2s}} = \sum_{\ell \in \Z} \frac{e^{- \frac{2 i \pi nQ}{c^2 (z+ d'/c + \ell)}} }{(z+d'/c + \ell)^2 |z+d'/c + \ell|^{2s}} = F_{c/\sqrt{Q},n,s} (z + d'/c)
\]
with the auxiliary function $F_{c,n,s}$ on $\Hcal$  defined for $c >0, n >0,s>0$ by 
\[
	F_{c,n,s} (z) := \sum_{\ell \in \Z} f_{c,n,s,z} (\ell), \quad {\textrm{with}} \quad f_{c,n,s,z}(x) := \frac{e^{- \frac{2 i \pi n}{c^2 (x+z)}}}{(x+z)^2 |x+z|^{2s}}.
\]
We will now give another expression for $F_{c,n,s}$ allowing us to compute more precisely the terms of $P_n (\cdot, s,N)_{|W_Q}$. 

As $f_{c,n,s,z}$ is $\Ccal^\infty$ on $\R$ and integrable as its two first derivatives, we can apply Poisson summation formula to rewrite 
\[
F_{c,n,s}(z) = \sum_{m \in \Z} \int_{- \infty}^{+ \infty} f_{c,n,s,z} (x) e^{-  2 i \pi m x} dx.
\]
Let us fix for now $\eta>0$, and restrict to the domain $\Im z \geq \eta$. The function $f_{c,n,s,z}$ then extends to an holomorphic function on $|\Im x| < \eta $ when we use the usual determination of the logarithm on $\C \backslash \R^-$ to write, for $x \in \R$,
\[
(x+z)^2 |x+z|^{2s} = (x+z)^{2+s} (x+ \overline{z})^s.
\]
The right term is clearly holomorphic in $x$ (when $z$ is fixed) so we can extend it on the domain $|\Im x| < \eta$. Notice that we still have $\left| (x+\overline{z})^s \right| = |x+\overline{z}|^s$ by definition of the determination of logarithm. As $f_{c,n,s,z}$ is holomorphic on this domain, we can shift the imaginary part of the integration axis by $\varepsilon \eta/2$, with $\varepsilon = -1$ if $m>0$ and $\varepsilon = 1$ otherwise, so that 
\[
\Re (- 2 i \pi m x) = 2 \pi m \Im (x) = - \pi |m| \eta.
\]
We then have 
\begin{eqnarray*}
\left|\int_{- \infty}^{+ \infty} f_{c,n,s,z} (x) e^{ - 2 i \pi m x} dx \right|  & = & \left| \int_{i \varepsilon \eta/2 + \R} f_{c,n,s,z} (x) e^{ - 2 i \pi m x} dx  \right|	\\
& \leq & \int_{i \varepsilon \eta/2 + \R} \frac{e^{ -  \pi |m| \eta}}{|x+z|^{2 + 2s}} dx \\
& \leq & e^{ - \pi |m| \eta} \int_\R \frac{1}{(\eta^2 / 4 + x^2)^{1+s}} dx.
\end{eqnarray*}

By real translation in the integral, we also see that for every $y \in \R$ and every $m \in \Z$, 
\[
	\int_\R f_{c,n,s,z+y} (x) e^{- 2 i \pi m (x+y)} dx = \int_{\R} f_{c,n,s,z} (x) e^{- 2 i \pi m x} dx.
\]
We can then rewrite 
\begin{eqnarray*}
\frac{P_n (\cdot,s,N)_{|W_Q} (z)}{Q |j_{W_Q} (z)|^{2s}} & = &  \sum_{\substack{c>0 \\ N |c \\ (Q,c/Q) = 1}} \frac{1}{c^{2 + 2s}} \sum_{\substack{0 \leq a,d \leq c \\ Q|(a,d) \\ ad \equiv Q [c]}} e^{ 2 i \pi n a/c} \sum_{m \in \Z} \int_\R f_{c/\sqrt{Q},n,s,z+d/c} (x) e^{ - 2 i \pi m x} dx \\
	& = &   \sum_{\substack{c>0 \\ N |c \\ (Q,c/Q) = 1}} \frac{1}{c^{2 + 2s}} \sum_{\substack{0 \leq a,d \leq c \\ Q|(a,d) \\ ad \equiv Q [c]}} e^{ 2 i \pi (n a+ md)/c} \sum_{m \in \Z} \int_\R f_{c/\sqrt{Q},n,s,z} (x) e^{ - 2 i \pi m x} dx \\
	& = &  \sum_{\substack{c>0 \\ N |c \\ (Q,c/Q) = 1}} \frac{1}{c^{2 + 2s}}  \sum_{m \in \Z} \sum_{\substack{0 \leq a,d \leq c \\ Q|(a,d) \\ ad \equiv Q [c]}} e^{ 2 i \pi (n a+ md)/c} \int_\R f_{c/\sqrt{Q},n,s,z} (x) e^{ - 2 i \pi m x} dx.
\end{eqnarray*}
For a fixed $c$, the integers $a$ and $d$ go through the multiples of $Q$ between $0$ and $c$ such that $a d = Q \mod c$. This amounts to say that $a = Qa'$ and $d = Qd'$ where $a',d'$ go through the integers between 0 and $c/Q$ such that $Q a'd' = 1 \mod c$, i. e. $d'$ is equal to $Q^{-1} {a'}^{-1}$ modulo $c/Q$. This proves the equality 
\[
\sum_{\substack{0 \leq a,d \leq c \\ Q|(a,d) \\ ad \equiv Q [c]}} e^{ 2 i \pi (n a+ md)/c} = S(m,n Q^{-1}; c/Q)
\]
where $Q^{-1}$ is the inverse of $Q$ modulo $c/Q$, so 
\[
P_n (\cdot,s,N)_{|W_Q} (z)  =  Q |j_{W_Q} (z)|^{2s}\sum_{\substack{c>0 \\ N |c \\ (Q,c/Q) = 1}} \frac{S(m,nQ^{-1} ;c/Q)}{c^{2 + 2s}} \sum_{m \in \Z} \int_\R f_{c/\sqrt{Q},n,s,z} (x) e^{ - 2 i \pi m x} dx.  
\]
Now, using the Weil bounds \eqref{bornesdeWeilKloosterman} on Kloosterman sums: 
\[
	\left| \frac{S(m,nQ^{-1} ;c/Q)}{c^{2 + 2s}} \sum_{m \in \Z} \int_\R |f_{c/\sqrt{Q},n,s,z} (x) e^{ - 2 i \pi m x}| dx   \right| \leq \frac{n^{1/2} \tau (c/Q)}{c^{3/2}} e^{- \pi |m| \eta} \int_\R \frac{1}{\eta^2/4 + x^2} dx
\]
which is the general term of an absolutely convergent series, allowing us to exchange the sum and the integral in the expression of $P_n (\cdot,s,N)_{|W_Q} (z) $, hence
\[
P_n (\cdot,s,N)_{|W_Q} (z)   = Q |j_{W_Q} (z)|^{2s} \sum_{m \in \Z} \sum_{\substack{c>0 \\ N |c \\ (Q,c/Q) = 1}} \frac{S(m,nQ^{-1} ;c/Q)}{c^{2 + 2s}}  \int_\R f_{c/\sqrt{Q},n,s,z} (x) e^{ - 2 i \pi m x} dx.  
\]
We can also take the limit $s \ra 0^+$ of this equality as the previous bound of absolute convergence does not depend on $s$, therefore we obtain 
\begin{eqnarray*}
	P_n (\cdot,N)_{|w_Q} (z) & = &  Q \sum_{m \in \Z} \sum_{\substack{c>0 \\ N |c \\ (Q,c/Q) = 1}} \frac{S(m,nQ^{-1} ;c/Q)}{c^{2}} \int_\R f_{c/\sqrt{Q},n,0,z} (x) e^{ - 2 i \pi m x} dx \\
	& = & Q \sum_{m \in \Z} \left( \sum_{\substack{c>0 \\ N |c \\ (Q,c/Q) = 1}} \frac{S(m,nQ^{-1} ;c/Q)}{c^{2}} \int_\R \frac{e^{ - \frac{2 i \pi n}{c^2/Q (x+z)} - 2 i \pi m(x+z)}}{(x+z)^2}  dx \right) e^{ 2 i \pi m z}.
\end{eqnarray*}
Let us compute this integral. Define 
\[
	G_{m,n,c} (z) : = \int_\R \frac{e^{ - \frac{2 i \pi n}{c^2 (x+z)} - 2 i \pi m(x+z)}}{(x+z)^2}  dx = \int_{i \Im (z) + \R} \frac{e^{ - \frac{2 i \pi n}{c^2 y} - 2 i \pi my}}{y^2} dy.
\]
As the term to integrate is holomorphic on $\C^*$, we can integrate on any horizontal line of ordinate $\alpha >0$, hence $G_{m,n,c} (z)$ does not depend on $z$, we denote it by $G_{m,n,c}$. For $m \leq 0$, we have 
\begin{eqnarray*}
| G_{m,n,c}| & = & \left| \int_{i \alpha + \R} \frac{ e^{- \frac{2 i \pi n}{c^2 y} - 2 i \pi m y}}{y^2} dy \right| \\
& \leq & \int_{i \alpha + \R} \frac{e^{2 \pi m \alpha}}{|y|^2} dy  \leq \int_{i \alpha + \R} \frac{dy}{|y|^2}
\end{eqnarray*}
and this goes to 0 when $\alpha$ goes to $+ \infty$, so $G_{m,n,c} = 0$ when $m \leq 0$.

Now, for $m>0$, 
\begin{eqnarray*}
	G_{m,n,c} & = & \int_{i + \R} \frac{ e^{- \frac{2 i \pi n}{c^2 y} - 2 i \pi m y}}{y^2} dy \\
	& = & 2 i \pi m \int_{2 \pi m - i \infty}^{2 \pi m + i \infty}  \frac{ e^{w - \frac{4 \pi^2 m n}{c^2 w} }}{w^2} dw, \qquad w = - 2 i \pi m \\
	& = & 2 i \pi m J_1 \left( \frac{4 \pi \sqrt{mn}}{c} \right) \frac{ic}{\sqrt{mn}}
\end{eqnarray*}
because of the integral representation of $J_1$ (Definition \ref{KloostermanBessel}). We finally obtain 
\begin{equation}
\label{formuleGmncJ1}
	G_{m,n,c} = - 2 \pi c \sqrt{\frac{m}{n}} J_1 \left( \frac{4 \pi \sqrt{mn}}{c} \right),
\end{equation}
hence 
\begin{eqnarray*}
	P_n (\cdot,N)_{|W_Q} (z) &  = & - 2 \pi Q \sqrt{m/n} \sum_{m \geq 1} \sum_{\substack{c>0 \\ N |c \\ (Q,c/Q) = 1}} \frac{S(m,nQ^{-1} ;c/Q)}{c^{2}} c/\sqrt{Q} J_1 \left( \frac{4 \pi \sqrt{mn}}{c/\sqrt{Q}} \right) e^{2i \pi m z} \\
	& = & - 2 \pi \sqrt{m/n} \sum_{m \geq 1} \sum_{\substack{c>0 \\ (N/Q) |c \\(Q,c) = 1}} \frac{S(m,nQ^{-1} ;c)}{c \sqrt{Q}}J_1 \left( \frac{4 \pi \sqrt{mn}}{c\sqrt{Q}} \right) e^{2i \pi m z} 
\end{eqnarray*}
after reindexation of $c$ by $c/Q$, which finishes the proof.
\end{proof}

\section{Final computations and proof of Corollary \ref{cormain}}
\label{sectionfinale}

We can now regroup all our results to obtain an exact formula for $(a_1,L_\chi)_{dp^2}^{+_{p^2}, \textrm{new}}$ and then estimate the error terms.

For every $N \geq 1$, every divisor $Q$ of $N$ such that $(Q,N/Q) = 1$ and every $x>0$, define
\[
	A_ {N,Q} (x) = 2 \pi \sum_{n=1}^{+ \infty} \frac{\chi(n)}{\sqrt{n}} e^{- 2 \pi n / x} \sum_{\substack{c>0 \\ (N/Q) |c \\ (c,Q)=1}} \frac{S(1,nQ^{-1} ; c)}{c \sqrt{Q}} J_1  \left( \frac{4 \pi \sqrt{n}}{c \sqrt{Q}} \right)
\]
and 
\[
	B_{N,Q} (x) = 2 \pi \sum_{n=1}^{+ \infty} \frac{\chi(n)}{\sqrt{n}} e^{-2 \pi n x / (D^2 N)} \sum_{\substack{c>0 \\ (N/Q) |c \\ (c,Q)=1}} \frac{S(1,nQ^{-1} ; c)}{c \sqrt{Q}} J_1  \left( \frac{4 \pi \sqrt{n}}{c \sqrt{Q}} \right)
\]
(so that $B_{N,Q} (x) = A_{N,Q} (D^2 N/x)$). We recognize here terms appearing in Proposition \ref{propformulesdestraces}, summed as indicated by the approximate functional equation of Lemma \ref{lemsumLfchi} $(c)$. More precisely, by Lemma \ref{a1Lchinewdp2},
\[
(a_1,L_\chi)_{dp^2}^{+_{p^2}, \textrm{new}} = (a_1,L_\chi)_{dp^2}^{+_{p^2}} - \frac{1}{p-1} (a_1,L_\chi)_{dp}^{\chi(p)_p},
\]
and by \eqref{eqapproxLfchi}, for any $x>0$ : 
\begin{eqnarray}
\label{eqcasdp2}
	(a_1,L_\chi)_{dp^2}^{+_{p^2}} & = & \sum_{n \geq 1} \frac{\chi(n)(a_1,a_n)_{dp^2}^{+_{p^2}}}{n} e^{ - \frac{ 2 \pi n}{x}} - \sum_{n \geq 1} \frac{\chi(n)(a_1,a_n \circ w_{dp^2})_{dp^2}^{+_{p^2}}}{n} e^{ - \frac{2 \pi n x}{dp^2} }	\nonumber \\
	& = & \sum_{n \geq 1} \frac{\chi(n)(a_1,a_n)_{dp^2}^{+_{p^2}}}{n} e^{ - \frac{ 2 \pi n}{x}} - \sum_{n \geq 1} \frac{\chi(n)((a_1,a_n)_{dp^2}^{+_{p^2},+_d} - (a_1,a_n)_{dp^2}^{+_{p^2},-_d})}{n} e^{ - \frac{2 \pi n x}{dp^2} } \nonumber \\
	(a_1,L_\chi)_{dp^2}^{+_{p^2}} & = & 2 \pi e^{ - \frac{2 \pi}{x}} - 2 \pi (A_{dp^2,1} (x) + A_{dp^2,p^2} (x)) + 2 \pi (B_{dp^2,dp^2}(x) + B_{dp^2,d} (x))
\end{eqnarray}
(we used here Remark \ref{remformuletracesPeterssonrestreintecombinee}). Here is also implicitly appearing the sign of the functional equation : for example, for $(a_1,L_\chi)_p^{+}$ (which is 0 by Lemma \ref{lemsumLfchi} $(c)$), we would have no principal term such as $2\pi$, and only error terms. In the same fashion, we obtain that for any $x>0$,  
\begin{equation}
\label{eqcasdp}
	(a_1,L_\chi)_{dp}^{\chi(p)_p} = 2 \pi e^{- \frac{2 \pi}{x}} - 2 \pi (A_{dp,1} (x) + \chi(p) A_{dp,p} (x)) + 2 \pi \chi(p) (B_{dp,dp}(x) + B_{dp,d} (x)).
\end{equation}
Consequently, we only have to give good estimates for the $A_{N,Q} (x)$ and $B_{N,Q} (x)$ (simultaneously in $x$). The idea for those is that we will choose $x$ of the same order of magnitude as $D^2 N$, so that $B_{N,Q}(x)$ is very small (given its exponential factors), whereas $A_{N,Q} (x)$ is not too large. 

Therefore, Theorem \ref{thmmain} is a direct consequence of the following lemma (notice that the only cases of $Q=N$ appearing in \eqref{eqcasdp2} and \eqref{eqcasdp} are for $B_{N,N}(x)$, hence made up to be small with our choice of $x$).

\begin{lem}
	For any $N \geq 1$, any divisor $Q$ of $N$ such that $(Q,N/Q)=1$, any $x>0$ and any quadratic Dirichlet character $\chi$ of conductor $D$ prime to $N$, 
	\begin{eqnarray*}
		|A_{N,Q} (x)| & \ll & \frac{\sqrt{D} (\log(D)+1) (\log(N) + \log(x))^2 \tau(N) e^{ - \frac{2 \pi}{x}}}{N} + \delta_{Q=N} \frac{x e^{ - \frac{2 \pi}{x}} \tau(D)}{N D^{3/2}} \\
		|B_{N,Q} (x)| & \ll & \frac{\sqrt{D} (\log(D)+1) (\log(N) + \log(x))^2 \tau(N) e^{ - \frac{ 2 \pi x}{D^2 N}}}{N} + \delta_{Q=N} \frac{\sqrt{D}e^{ - \frac{ 2 \pi x}{D^2 N}} \tau(D)} {x}
	\end{eqnarray*}
Therefore, choosing $x = (D^2 N) \log(D^2 N)$, we obtain after simplification and use of natural bounds that for $Q \neq N$ : 
\[
|A_{N,Q} (x)| + |B_{N,Q}(x)| \ll \frac{\sqrt{D} (\log(D)+1)^3 \log(N)^2 \tau(N)}{N}
\]
with an absolute implied constant. Applied to $N=dp^2$, this gives us the error term of Theorem \ref{thmmain}.
\end{lem}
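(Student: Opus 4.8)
Here is how I would approach the proof.

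\medskip

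\noindent The plan is to bound $A_{N,Q}(x)$ for all $x>0$ and then to deduce the bound for $B_{N,Q}$ for free: since $B_{N,Q}(x)=A_{N,Q}(D^{2}N/x)$, the substitution $x\mapsto D^{2}N/x$ in the $A$-bound gives the $B$-bound (one checks at once that the exceptional summand $\delta_{Q=N}\,\frac{x\,e^{-2\pi/x}\tau(D)}{ND^{3/2}}$ is carried exactly to $\delta_{Q=N}\,\frac{\sqrt D\,e^{-2\pi x/(D^{2}N)}\tau(D)}{x}$). To handle $A_{N,Q}(x)$, first I would unfold. Fix $c$ with $(N/Q)\mid c$ and $(c,Q)=1$; writing $e_{c}(t)=e^{2\pi it/c}$ and replacing $k$ by $(mQ)^{-1}$ in the Kloosterman sum gives $S(1,nQ^{-1};c)=\sum_{m\bmod c}^{\times}e_{c}(\overline{mQ})\,e_{c}(mn)$, while primitivity of $\chi$ mod $D$ lets one write $\chi(n)=\tau(\bar\chi)^{-1}\sum_{a\bmod D}\bar\chi(a)\,e^{2\pi ian/D}$ with $|\tau(\bar\chi)|=\sqrt D$. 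Substituting both and interchanging the finite sums with the (absolutely convergent) $n$-sum turns $A_{N,Q}(x)$ into
\[
 \frac{2\pi}{\tau(\bar\chi)\sqrt Q}\sum_{\substack{c>0\\ (N/Q)\mid c\\ (c,Q)=1}}\frac{1}{c}\sum_{m\bmod c}^{\times}\sum_{a\bmod D}\bar\chi(a)\,e_{c}(\overline{mQ})\,V\!\big(\gamma(m,a,c);c\big),
\]
where $\gamma(m,a,c):=\{(mD+ac)/(cD)\}$ and $V(\theta;c):=\sum_{n\ge1}\tfrac{e^{2\pi in\theta}}{\sqrt n}\,e^{-2\pi n/x}\,J_{1}\!\big(\tfrac{4\pi\sqrt n}{c\sqrt Q}\big)$.

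\medskip

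\noindent The heart of the argument is the two-sided estimate
\[
 |V(\theta;c)|\ \ll\ \frac{1}{c\sqrt Q}\,\min\Big(x\,e^{-2\pi/x},\ \frac{\log(2+x)}{\|\theta\|}\Big).
\]
The first bound comes from $|J_{1}(z)|\le z/2$ together with $\sum_{n\ge1}e^{-2\pi n/x}\ll x\,e^{-2\pi/x}$. The second is Abel summation against $\big|\sum_{n\le Y}e^{2\pi in\theta}\big|\le(2\|\theta\|)^{-1}$: it remains to show that $a_{n}:=\frac{e^{-2\pi n/x}}{\sqrt n}J_{1}(\frac{4\pi\sqrt n}{c\sqrt Q})$ has total variation $\ll \frac{\log(2+x)}{c\sqrt Q}$. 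This holds because, up to the $O\!\big(1+\sqrt x/(c\sqrt Q)\big)$ sign changes of $J_{1}$ in the effective range $n\lesssim x$, the sequence $(a_{n})$ is a union of monotone pieces, and on the $k$-th piece $|a_{n}|\ll (k\,c\sqrt Q)^{-1}$ since $|J_{1}|$ is bounded and $\sqrt n\asymp k\,c\sqrt Q$ there, so the variations sum to $\ll (c\sqrt Q)^{-1}\sum_{k\le\sqrt x/(c\sqrt Q)}k^{-1}$. This is where the logarithmic factors $(\log N+\log x)$ are generated.

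\medskip

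\noindent Next I would do the arithmetic bookkeeping. Inserting the $V$-bound and $|\bar\chi(a)e_{c}(\overline{mQ})|\le1$ reduces us to $\sum_{m\bmod c}^{\times}\sum_{a\bmod D}\min\!\big(xe^{-2\pi/x},\|\gamma(m,a,c)\|^{-1}\big)$. For fixed $m$, the residues $mD+ac\bmod cD$ run through an arithmetic progression of step $c$, so the $D$ resulting values of $\|\gamma\|$ are $\gg (cD)^{-1}$, spaced $\gg D^{-1}$ apart, and their reciprocals sum to $\ll D(\log D+1)$ --- unless $\gamma=0$ occurs, i.e.\ $cD\mid mD+ac$; reducing mod $c$ with $(m,c)=1$ forces $c\mid D$, and since $(c,Q)=1$ and $(D,N)=1$ this forces $c=1$, hence $N/Q=1$, i.e.\ $Q=N$. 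That single term ($Q=N$, $c=1$) is treated separately via the first bound for $V$ together with the cancellation in $\sum_{a}\bar\chi(a)(\cdots)$ (coming from $\sum_{n=1}^{D}n\chi(n)=0$ for even $\chi$ and Pólya–Vinogradov for $\sum_{n}n^{2}\chi(n)$, equivalently the functional equation of $L(\chi,\cdot)$), which produces the extra $\tau(D)\,D^{-3/2}$ in the stated $\delta_{Q=N}$ summand. Then, with $\varphi(c)$ terms in the $m$-sum and the factor $\frac{1}{c}\cdot\frac{1}{c\sqrt Q}$, writing $c=(N/Q)c'$ gives $\sum_{(N/Q)\mid c}\frac{\varphi(c)}{c^{2}}(\cdots)\ll\frac{Q}{N}\sum_{c'}\frac{\tau(Nc')}{c'^{2}}(\cdots)$, the convergent divisor-weighted tail contributing $\tau(N)$; combining with $|\tau(\bar\chi)|^{-1}=D^{-1/2}$, $(\log D+1)$ and $\log(2+x)^{2}\ll(\log N+\log x)^{2}$ yields the claimed bound for $|A_{N,Q}(x)|$, and likewise for $|B_{N,Q}(x)|$. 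Finally, taking $x=(D^{2}N)\log(D^{2}N)$ makes $\log x\ll\log D+\log N$, so $(\log D+1)(\log N+\log x)^{2}\ll(\log D+1)^{3}\log(N)^{2}$ (using $N\ge2$); for $Q\ne N$ the exceptional terms vanish, $e^{-2\pi/x}\le1$, and $e^{-2\pi x/(D^{2}N)}=(D^{2}N)^{-2\pi}\le1$, giving $|A_{N,Q}(x)|+|B_{N,Q}(x)|\ll\frac{\sqrt D(\log D+1)^{3}\log(N)^{2}\tau(N)}{N}$; specialising to $N=dp^{2}$ (where $\tau(dp^{2})=6$, $\log(dp^{2})\ll\log p$, $(dp^{2})^{-1}\le p^{-2}$) gives the error term of Theorem~\ref{thmmain}.

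\medskip

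\noindent \textbf{The main obstacle} is the estimate for $V(\theta;c)$: one must exploit the exponential decay and the oscillation $e^{2\pi in\theta}$ at the same time and, in the Abel-summation bound, control the variation of $a_{n}$ across the zeros of $J_{1}$ without losing more than two logarithms. The secondary technical point is the separation of the $c=1$, $Q=N$ term together with the refined $\tau(D)D^{-3/2}$ bound obtained there from the cancellation in the $\chi$-twisted sum.
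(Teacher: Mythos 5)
Your overall architecture (reduce $B_{N,Q}$ to $A_{N,Q}$ via $x\mapsto D^{2}N/x$, isolate the inner sum for each modulus $c$, and play a trivial bound against a cancellation bound) matches the paper's, but the way you implement the cancellation bound creates a genuine convergence problem in the sum over $c$. After opening the Kloosterman sum and expanding $\chi$ by Gauss sums, you bound the $\varphi(c)\cdot D$ terms of the $(m,a)$-sum \emph{individually} by $\frac{1}{c\sqrt Q}\min\big(xe^{-2\pi/x},\log(2+x)/\|\gamma\|\big)$. For fixed $c$ this yields at best $\frac{1}{c}\cdot\frac{1}{c\sqrt Q}\cdot\varphi(c)\cdot D(\log D+1)\log(2+x)\asymp\frac{D(\log D+1)\log(2+x)}{c\sqrt Q}$, and the other branch of the $\min$ gives $\asymp\frac{\varphi(c)\,D\,x e^{-2\pi/x}}{c^{2}\sqrt Q}\asymp\frac{Dx}{c\sqrt Q}$; either way the remaining sum $\sum_{(N/Q)\mid c}c^{-1}$ diverges. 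Your bookkeeping step $\sum_{(N/Q)\mid c}\frac{\varphi(c)}{c^{2}}(\cdots)\ll\frac{Q}{N}\sum_{c'}\frac{\tau(Nc')}{c'^{2}}(\cdots)$ silently replaces $\varphi(c)\asymp c$ by something of size $\tau(c)\sqrt{c}$, i.e.\ it assumes exactly the square-root cancellation in the $m$-sum that you discarded by bounding term by term. The paper keeps the Kloosterman sum intact and obtains convergence for large $c$ from the Weil bound $|S(1,nQ^{-1};c)|\le\tau(c)\sqrt{c}$ (giving $\sum_{c}\tau(c)c^{-3/2}$), reserving the Polya--Vinogradov-type cancellation (quoted as Lemma 5.9 of \cite{LeFourn1}, which already packages the Gauss-sum expansion together with Weil) for the range $c<x^{2}$ only. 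To repair your argument you must reinstate a Weil-type bound for the $m$-sum (or simply for the full Kloosterman sum) in the range of large $c$; the rest of your unfolding then amounts to an in-line proof of the quoted Lemma 5.9.

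Two smaller points. First, your treatment of the degenerate term is off: $\gamma=0$ forces $c\mid D$ (hence $Q=N$, since $(D,N)=1$ and $(N/Q)\mid c$), but not $c=1$; it can occur for every divisor $c$ of $D$. The paper's exceptional modulus is the single value $c=D$, where its Lemma 5.9 is not applicable, and it is handled by the Weil-bound estimate, which is what produces the $\delta_{Q=N}\,x e^{-2\pi/x}\tau(D)/(ND^{3/2})$ term; your proposed alternative via $\sum_{n}n\chi(n)=0$ and the functional equation of $L(\chi,\cdot)$ is asserted rather than carried out. Second, your total-variation estimate is sound (in fact the variation of $n\mapsto e^{-2\pi n/x}J_{1}\big(4\pi\sqrt n/(c\sqrt Q)\big)/\sqrt n$ is $\ll(c\sqrt Q)^{-1}$ with no logarithm, since $J_{1}(u)/u$ has finite total variation on $(0,+\infty)$ --- this is exactly how the paper argues), so the Abel-summation component of your plan, and the final optimisation in $x$, are correct.
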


\begin{proof}
	As we remarked before, $B_{N,Q} (x) = A_{N,Q} (N/x)$ so it is enough to obtain a bound on $A_{N,Q} (x)$ for all $x>0$.
	The double sum defining $A_{N,Q} (x)$ is absolutely convergent (e.g. by Weil bounds \eqref{bornesdeWeilKloosterman}), so 
	\[
		A_{N,Q}(x) = 2 \pi \sum_{\substack{c>0 \\ (N/Q) |c \\ (c,Q)=1}} A_{N,Q,c}(x),
	\]
with 
\[
	A_{N,Q,c} (x) = \frac{1}{c \sqrt{Q}} \sum_{n=1}^{+ \infty} \frac{\chi(n)}{\sqrt{n}} e^{- 2 \pi n / x} \frac{S(1,nQ^{-1};c)}{c \sqrt{Q}} J_1  \left( \frac{4 \pi \sqrt{n}}{c \sqrt{Q}} \right).
\]
With Weil bounds \eqref{bornesdeWeilKloosterman} and the bound $|J_1(t)| \ll |t|$ for $t$ real, we obtain 
\begin{eqnarray}
\label{eqANQcWeil}
	|A_{N,Q,c} (x)| & \ll & \sum_{n=1}^{+ \infty} \frac{e^{- 2 \pi n / x}}{c^2 Q} \tau(c) \sqrt{c} \ll \frac{\tau(c)}{Q c^{3/2}}  \sum_{n=1}^{+ \infty} e^{- 2 \pi n / x} \nonumber \\
	|A_{N,Q,c} (x)| & \ll & \frac{x e^{- 2 \pi /x} \tau(c)}{Q c^{3/2}}.
\end{eqnarray}
On another side, if $c \neq D$, there is a natural cancellation in the terms defining $A_{N,Q,c} (x)$. To see this, note that if $c \neq D$, one can apply Polya-Vinogradov techniques to obtain that for every integers $K,K'$, 
\[
	\left| \sum_{n=K}^{K'} \chi(n) S(1,nQ^{-1};c) \right| \leq \frac{4 c \sqrt{D}}{\pi^2} ( \log(Dc) + 1.5) \ll c \sqrt{D} (\log(Dc) +1)
\]
(this inequality is proved in Lemma 5.9 of \cite{LeFourn1}). Defining 
\[
T(c,n) = \sum_{k=1}^n \chi(k) S(1,kQ^{-1};c) \quad \textrm{   and   }  \quad f_{T} (y) = \frac{J_1\left( \frac{4 \pi \sqrt{y} }{c \sqrt{Q}} \right)}{4 \pi \sqrt{y} / c \sqrt{Q}} e^{- 2 \pi y / x},
\]
we can write by Abel transform 
\begin{eqnarray*}
	A_{N,Q,c} (x) & = & \sum_{n=1}^{+ \infty} \chi(n) S(1,nQ^{-1};c) \frac{4 \pi f_T (n)}{c^2 Q} =  \frac{4 \pi}{c^2 Q} \sum_{n=1}^{+ \infty} T(c,n) (f_T(n) - f_T(n+1))
\end{eqnarray*}
so that 
\begin{eqnarray*}
	|A_{N,Q,c} (x)| & \ll & \frac{1}{c^2 Q} c \sqrt{D} (\log(Dc)+1) \sum_{n=1}^{+ \infty} |f_T(n) - f_T(n+1)|
\end{eqnarray*}
and by definition of the $f_T$, this sum is $e^{-2\pi/x}$ times less than the total variation of $J_1(y)/y$ on $]0, + \infty[$, which is finite, so we obtain 
\begin{equation}
\label{eqANQcPV}
	|A_{N,Q,c} (x)|  \ll  \frac{(\log(Dc)+1) \sqrt{D}}{c Q} e^{ - 2 \pi /x}.
\end{equation}
Notice that this bound is naturally almost uniform on $x$, but not convergent in $c$, as opposed to the bound obtained previously.

Comparing quickly \eqref{eqANQcWeil} and \eqref{eqANQcPV}, it is natural to choose \eqref{eqANQcPV} for $c < x^2$ (except if $c = D$) and \eqref{eqANQcWeil} for $c> x^2$. This gives  (omitting for now the possible term $c=D$)
\begin{eqnarray*}
	|A_{N,Q} (x)| & \ll & \sum_{\substack{c < x^2 \\ (N/Q)|c \\ (Q,c)=1}} \frac{(\log(Dc)+1) \sqrt{D}}{c Q} e^{- 2 \pi /x} + \sum_{\substack{c \geq x^2 \\ (N/Q)|c \\ (Q,c)=1}} \frac{x e^{- 2 \pi /x} \tau(c)}{Q c^{3/2}} \\ 
	& \ll & \frac{\sqrt{D}(\log(D)+1)}{N} \left( \log(N/Q) \log (x) + \log(x)^2 \right)e^{- 2 \pi /x}  \\
	& & + \frac{\tau(N/Q) \sqrt{Q/N} x e^{- 2 \pi /x}}{N} \frac{\log(x)}{x} \\
	& \ll & \frac{\sqrt{D}(\log(D)+1)}{N} \left( \log(N) + \log(x) \right)^2 + \frac{\tau(N) e^{ - 2 \pi /x} \log(x)}{N} \\
	& \ll & \frac{\sqrt{D}(\log(D)+1)}{N} (\log(N) + \log (x)) ^2 \tau(N) e^{- 2 \pi /x}.
\end{eqnarray*}
Finally, notice that the possible term $c=D$ can appear only if $Q=N$ because $(D,N)= 1$, and we apply \eqref{eqANQcWeil} to it hence the $\delta_{Q=N}$ terms in the Lemma.
\end{proof}

To conclude this paper, we prove how Theorem \ref{thmmain} implies Corollary \ref{cormain}.

\begin{proof}[Proof of Corollary \ref{cormain}]
\hspace*{\fill}

$(a)$ By Theorem \ref{thmmain} in case $\chi=1$, there is an eigenform $f \in S_2(\Gamma_0(dp^2))^{+_{p^2}}$ such that $L(f,1)$ is nonzero. By the famous result of Kolyvagin and Logachev (\cite{KolyvaginLogachev}, Theorem 0.3), this implies that the abelian variety $A_f$ associated to $f$ obtained as a quotient of $J(d,p)$ is of algebraic rank zero. 

$(b)$ By Theorem \ref{thmmain}, there is an eigenform $f \in S_2(\Gamma_0(dp^2))^{+_{p^2},\textrm{new}}$ such that $L(f \otimes \chi,1)$ is nonzero. Such an eigenform necessarily satisfies $f_{|w_d} = - \chi(d) f$ because if $f_{|w_d}= \chi(d) f$, $L(f,1)=0$ (Lemma \ref{lemsumLfchi} $(c)$ because $\chi(-1)=1$ here). By Theorem 0.3 of \cite{KolyvaginLogachev} applied to $f \otimes \chi$, the abelian variety $A_f$ obtained as a quotient of $J(d,p)$ then has its twist by $K/\Q$ relatively to $[-1]$ of algebraic rank zero (because this twist is isogenous over $\Q$ to the abelian variety $A_{f \otimes \chi}$), and the canonical quotient morphism $J(d,p) \rightarrow A_f$ satisfies $\pi \circ w_d = - \chi(d) \pi$ because $f_{|w_d}= - \chi(d)f$, therefore this twist is a rational quotient of $J(d,p,\chi)$. Indeed, let $i : J(d,p) \rightarrow J(d,p,\chi)$ be the twist isomorphism (i.e. defined over $K$ and such that $j^{\sigma} = \chi(d) j \circ w_d$) and $i : A_f \rightarrow A_f \otimes \chi$ (i.e. defined over $K$ such that $i^\sigma = - i$, and consider $\pi'_f : J(d,p,\chi) \rightarrow A_{f \otimes \chi}$ the natural quotient morphism making the diagram below commutative 
\[
	\xymatrix{J(d,p) \ar[r]^{\pi_f} \ar[d]_{j} & A_f \ar[d]^{i} \\ J(d,p,\chi) \ar[r]^{\pi'_f} & A_{f} \otimes \chi}.
\]
Therefore, $\pi'_f$ is defined over $K$ and 
\[
{\pi'_f}^\sigma = i^\sigma \circ \pi_f^\sigma \circ (j^{-1})^{\sigma} = - i \circ \pi_f \circ (\chi(d) w_d \circ j^{-1}) = (- \chi(d))^2 i \circ \pi_f \circ j^{-1}= \pi'_f,
\]
which proves that $A_f \otimes \chi$ is a rational quotient of $J(d,p,\chi)$.

$(c)$ This is a consequence of $(a)$ and $(b)$ coming from the fact that for $d$ prime to $p$ and squarefree, there is a rational isogeny 
	\begin{equation}
	\label{eqdecompjacnonsplit}
		J'(d,p) \rightarrow J(d,p) \oplus J_0(d)
	\end{equation}
	equivariant under the action of the Hecke algebra generated by the $T_{\ell}$, $\ell$ prime not dividing $dp$, and $w_d$ such as it can be defined naturally on both sides. For $d \in \{2,3,5,7,13\}$, $J_0(d)=0$ hence the result.
	
	This fact is due to \cite{Chen} for $d=1$ and cited in \cite{DarmonMerel}, and its  principle has been generalised to any $d$ by \cite{DeSmitEdixhoven}. For the details, one proof can be found in \cite{LeFournthese2} (Lemma I.6.2).
\end{proof}

\bibliographystyle{amsalpha}
\bibliography{bibliotdn}

\end{document}